\theoremstyle{plain}
\newtheorem{theorem}{Theorem}[section]
\newtheorem{proposition}[theorem]{Proposition}
\newtheorem{lemma}[theorem]{Lemma}
\theoremstyle{definition}
\newtheorem{definition}[theorem]{Definition}
\theoremstyle{remark}
\newtheorem{remark}[theorem]{Remark}
\newtheorem*{Conc4}{Condition $C^{\prime \prime} (4)$}
\newtheorem*{Cont4}{Condition $T (4)$ }
\def\BZ{\mathbb Z}
\def\A{\mathcal A}
\def\O{\mathcal O}
\def\calP{\mathcal P}
\def\calT{\mathcal T}
\def\la{\langle}
\def\ra{\rangle}
\def\PD{\Delta}
\def\squarep{square presentation}
\def\gridp{grid presentation}
\def\pcomplex{peripheral complex}
\def\calD{\mathcal D}
\def\Z{\mathbb Z}
\def\c4t4{$C^{\prime \prime} (4) - T (4)$ }
\def\calP{\mathcal P}
\def\canc4{$C^{\prime \prime} (4)$}
\def\cant4{$T (4)$}
\begin{document}

\title[Non-peripheral ideal decompositions of alternating 
knots]{Non-peripheral ideal decompositions of alternating knots}

\author{Stavros Garoufalidis}
\address{School of Mathematics \\
         Georgia Institute of Technology \\
         Atlanta, GA 30332-0160, USA \newline
         {\tt \url{http://www.math.gatech.edu/~stavros}}}
\email{stavros@math.gatech.edu}
\author{Iain Moffatt}
\address{Department of Mathematics \\ 
Royal Holloway \\
         University of London \\
         Egham, Surrey \\
         TW20 0EX, United Kingdom \newline
         {\tt \url{http://www.personal.rhul.ac.uk/uxah/001}}}
\email{iain.moffatt@rhul.ac.uk}
\author{Dylan P. Thurston}
\address{Department of Mathematics \\
        Indiana University \\ 
        Bloomington, IN 47405-7106, USA \newline
        {\tt \url{http://pages.iu.edu/~dpthurst}}}
\email{dthurston@indiana.edu}

\thanks{S.G. and D.T. were supported in part by National Science
  Foundation Grants DMS-15-07244 and DMS-14-06419 respectively. 
}
\subjclass[2010]{Primary 57N10. Secondary 20F06, 57M25.}
\keywords{ideal triangulations, knots, hyperbolic
  geometry, ideal tetrahedra, small cancellation theory, 
  Dehn presentation, alternating knots, Volume Conjecture.}

\date{\today}


\begin{abstract}
An ideal triangulation $\calT$ of a hyperbolic 3-manifold $M$ with one
cusp is non-peripheral if no edge of $\calT$ is homotopic to a curve 
in the boundary torus of $M$. For such a triangulation, the gluing and
completeness equations can be solved to recover the hyperbolic structure
of $M$. A planar projection of a knot gives four ideal cell
decompositions of its complement (minus 2 balls), two of which are ideal 
triangulations that use 4 (resp., 5) ideal tetrahedra per crossing. 
Our main result is that these ideal triangulations are 
non-peripheral for all planar, reduced, alternating projections of 
hyperbolic knots. 
Our proof uses the small cancellation properties of the 
Dehn presentation of  alternating knot groups, and an explicit solution
to their word and conjugacy problems. 
In particular, we describe a planar complex that encodes all geodesic words that represent elements of the peripheral subgroup of an alternating knot group.  This gives a polynomial time algorithm for checking if an element in an alternating knot group is peripheral.
Our motivation for this work comes from the Volume 
Conjecture for  knots.
\end{abstract}

\maketitle
\tableofcontents


\section{Introduction}
\label{sec.intro}

\subsection{Motivation: the Volume Conjecture}
\label{sub.motivation}

The motivation of our paper comes from the Kashaev's {\em Volume Conjecture} 
for knots in 3-space, which states that for a hyperbolic knot $K$ in
$S^3$ we have:
$$
\lim_{n \to \infty} \frac{1}{n} \log |\la K\ra_N | =
\frac{\mathrm{Vol}(K)}{2\pi}
$$
where $\la K \ra_N$ is the {\em Kashaev invariants} of $K$; see
\cite{K, MM}. This gives a precise connection between quantum topology
and hyperbolic geometry. The Volume Conjecture has been verified for only a handful 
hyperbolic knots: initially for the simplest hyperbolic $4_1$ knot
and now, due to the work of Ohtsuki~\cite{Oh1}, and Ohtsuki and Yokota~\cite{Oh2}, for all hyperbolic
knots with at most $6$ crossings.

The Volume Conjecture requires a common input for computing both
the Kashaev invariant and the hyperbolic volume. Such an input turns out
to be a planar projection of a knot $K$ which allows one to
express the Kashaev invariant as a multi-dimensional state sum whose
summand is a ratio of quantum factorials (4 or 5, depending on the
model used).

On the other hand, a planar projection gives four ideal cell
decompositions of its complement (minus 2 balls), two of which are ideal 
triangulations that use 4 (resp., 5) ideal tetrahedra per crossing. 
These ideal triangulations are well-known from the early days of hyperbolic
geometry, and were used by Weeks~\cite{We} (in his computer program
\texttt{SnapPy}~\cite{snappy}), by the third author~\cite{Th1}, 
Yokota~\cite{Yo1, Yo3}, Sakuma-Yokota~\cite{SY} and others.

An approach to the Volume Conjecture initiated by the third author in 
\cite{Th1}, and also by Yokota, Kashaev, Hikami, the first author 
and others (see \cite{Ga1,KY,Hi,Yo1}), is to convert multi-dimensional 
state-sum formulas for the Kashaev invariant to multi-dimensional 
state-integral formulas over suitable cycles, and then to apply a 
steepest descent method to study the asymptotic behaviour of 
the Kashaev invariant. The summand (and hence, the integrand) depends on 
the planar projection and the steepest descend method is applied 
to a leading term of the integrand, the so-called potential function.
The critical points of the potential function have a 
geometric meaning, namely they are solutions to the \emph{gluing equations}.
The latter are a special system of polynomial equations (studied by
W.~Thurston and Neumann-Zagier in \cite{Th,NZ}) that are associated to the 
ideal triangulations of the knot complement discussed above. A suitable
solution to the gluing equations recovers the hyperbolic structure,
and the value of the potential function is the volume of the knot.

The problem is that every planar projection leads to ideal triangulations,
hence to gluing equations, and even if we know that the knot is hyperbolic,
it is by no means obvious that those gluing equations have a suitable solution
(or in fact, any solution) that recovers the complete hyperbolic structure. 
It turns out that if a knot is hyperbolic, the lack of a suitable solution 
occurs only when edges of the ideal triangulation are homotopic
to peripheral curves in the boundary tori.

\subsection{Non-peripheral ideal triangulations of alternating knots}
\label{sub.nonper}

{\em Ideal triangulations} of hyperbolic 3-manifolds with cusps were introduced
by W. Thurston in his study of Geometrization of 3-manifolds; see \cite{Th}.
For thorough discussions, see \cite{BP,snappy,NZ,We}.
An ideal triangulation $\calT$ of a hyperbolic 3-manifold $M$ with one
cusp is {\em non-peripheral} if no edge of $\calT$ is homotopic to a curve 
in the boundary torus of $M$. For such a triangulation, the gluing and
completeness equations of \cite{NZ}
can be solved to recover the hyperbolic structure
of $M$. For a proof, see \cite[Lem.2.2]{Ti} and also the discussion in
\cite[Sec.3]{DG}.

A planar projection $\PD$ of a knot gives rise to four ideal cell
decompositions of its complement (namely, $\calT_{2B}(\PD)$,
$\calT_{O}^\circ(\PD)$, $\calT_{4T}^\circ(\PD)$ and $\calT_{5T}^\circ(\PD)$), 
the last two of which are ideal triangulations that use 4 (resp., 5) ideal 
tetrahedra per crossing. We will briefly recall these decompositions here, 
although their precise definition is not needed for the statement and proof 
of Theorem~\ref{thm.arcs} below.

\noindent
$\bullet$ 
$\calT_{2B}(\PD)$ is a decomposition of the knot complement into one ball 
above and one ball below the planar projection. These two balls have a 
cell-decomposition that matches the planar projection of the knot, and 
were originally studied by W. Thurston, and more recently by 
Lackenby~\cite{La}.

\noindent
$\bullet$
$\calT_{O}^\circ(\PD)$ is a decomposition of the knot complement minus two 
balls into ideal octahedra, one at each crossing of $\PD$. This was described 
by Weeks~\cite{We}, and also by the third author~\cite{Th1}, and by 
Yokota~\cite{Yo1,Yo3}.

\noindent
$\bullet$
Each ideal octahedron can be subdivided into 4 ideal tetrahedra, or into
5 ideal tetrahedra. Thus, a subdivision of $\calT_{O}^\circ(\PD)$ gives rise
to two ideal triangulations of the knot complement minus two balls, denoted
by $\calT_{4T}^\circ(\PD)$ and $\calT_{5T}^\circ(\PD)$.

\begin{theorem}
\label{thm.1}
If $\PD$ is a prime, reduced, alternating projection of a non-torus knot $K$,
then the four ideal cell decompositions $\calT_{2B}(\PD)$,
$\calT_{O}^\circ(\PD)$, $\calT_{4T}^\circ(\PD)$ and $\calT_{5T}^\circ(\PD)$ 
are non-peripheral. Consequently, the gluing equations have a solution
that recovers the complete hyperbolic structure.
\end{theorem}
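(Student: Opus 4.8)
The plan is to translate the geometric non-peripherality condition into a statement about conjugacy classes in the knot group and to settle it using the small cancellation structure of the Dehn presentation. First, since $\PD$ is a prime, reduced, alternating diagram of a non-torus knot, Menasco's theorem guarantees that $M=S^3\setminus K$ is hyperbolic with a single cusp, so non-peripherality is meaningful; granting it, the final sentence of the theorem is immediate from \cite[Lem.2.2]{Ti} together with the discussion in \cite[Sec.3]{DG}. Closing up an edge of an ideal triangulation through the cusp yields a free homotopy class, that is, a conjugacy class in $\pi_1(M)$, and the edge is homotopic into the boundary torus precisely when this conjugacy class meets the peripheral subgroup $P\cong\BZ^2$. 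It therefore suffices to prove that for each of the four decompositions, the conjugacy class of every edge avoids $P$.

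Next I would work with the Dehn presentation of the knot group, whose generators are the bounded regions of $\PD$ and which has one relator per crossing. The essential point, and the place where the hypotheses \emph{prime}, \emph{reduced} and \emph{alternating} are used, is that for such a diagram this presentation satisfies the small cancellation conditions $C''(4)$ and $T(4)$. Greendlinger-type arguments for $C''(4)$--$T(4)$ presentations then furnish solutions to both the word and the conjugacy problems and, crucially, a normal form for geodesic and cyclically geodesic words with tight control over how relators can overlap.

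With geodesics under control, the core construction is a two-dimensional combinatorial model, the \emph{peripheral complex}, whose closed paths spell out exactly the cyclically reduced geodesic words whose conjugacy classes lie in $P$. In parallel, I would identify, for each of the four decompositions, the homotopy class of every edge as an explicit short word in the Dehn generators, arising from a small, uniform family of arcs near the crossings and strands of $\PD$; this is the content of Theorem~\ref{thm.arcs}. Deciding non-peripherality then reduces to a finite, local verification: using the conjugacy-problem solution one replaces each edge word by its cyclic geodesic representatives and checks that none of them can be read as a closed path in the peripheral complex. Performing this check uniformly across $\calT_{2B}(\PD)$, $\calT_{O}^\circ(\PD)$, $\calT_{4T}^\circ(\PD)$ and $\calT_{5T}^\circ(\PD)$ completes the argument.

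The main obstacle is the construction and the correctness of the peripheral complex. Verifying the conditions $C''(4)$ and $T(4)$ from an alternating diagram is essentially diagrammatic bookkeeping, but converting the resulting control on geodesics into a \emph{complete} description of which conjugacy classes actually meet $P$---rather than merely recognizing words that are short or cyclically reduced---requires understanding how the two peripheral generators interact inside a $C''(4)$--$T(4)$ group and excluding ``accidental'' peripheral words that the raw small cancellation machinery does not obviously rule out. Once this description is secured, matching it against the explicit list of edge words from Theorem~\ref{thm.arcs} is routine.
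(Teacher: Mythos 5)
Your proposal is correct and follows essentially the same route as the paper: reduce Theorem~\ref{thm.1} to Theorem~\ref{thm.arcs} by noting that every edge of the four decompositions is one of the four arc types, then settle Theorem~\ref{thm.arcs} using Weinbaum's $C''(4)$--$T(4)$ small cancellation structure of the Dehn-type presentation together with a peripheral complex that completely characterizes which geodesic words are peripheral. The only technical adjustment the paper makes is to work with the \emph{augmented} Dehn presentation $\A_{\PD}$ (dropping the relator $X_0$ so that all relators have length four, and transferring the word problem via $\A_{\PD}\cong\calD_{\PD}\ast\BZ$), and its peripheral complex detects peripheral \emph{elements} as paths between prescribed lattice points, with conjugacy invoked only for the short arcs.
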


\subsection{Alternating knots and small cancellation theory}
\label{sub.results}

The above theorem follows from proving that all edges of the above ideal
triangulations are homotopically non-peripheral. Luckily, we can describe 
those edges directly in terms of the planar projection of the knot as follows.

\begin{definition}
Let $\Delta\subset \mathbb{R}^2$ be a knot diagram with $n$
crossings. Consider the projection plane $\mathbb{R}^2 $ as the
$xy$-plane of $\mathbb{R}^3$, and consider the knot $K\subset
S^3=\mathbb{R}^3 \cup \{\infty\}$ obtained from $\Delta$ by
``pulling'' the overcrossing arcs above the plane and undercrossing
arcs under the plane in the standard way. Fix a basepoint for
$\pi_1(S^3 \setminus K)$ in the unbounded region near one strand
of~$K$. We distinguish four kinds of loops in $\pi_1(S^3 \setminus K)$.
\begin{enumerate}
\item A \emph{Wirtinger arc}
  follows the double of $\PD$ through $k$ crossings with $1 < k < 2n$
  and then returns to the basepoint through either the upper or lower
  half-space.
\item A \emph{Wirtinger loop}
  starts at the basepoint, travels in either the upper (resp.\ lower)
  half-space to pass through a region~$R$ of~$\Delta$, passes through a
  region adjacent to~$R$, and then returns through the upper (resp.\ lower)
  half-space to the basepoint. We forbid the
  short loop around the strand near the basepoint, which is manifestly
  a meridian.
\item  A \emph{Dehn arc} starts at the basepoint, travels in the
  upper (resp.\ lower) half-space through a region of $\PD$ and then returns
  to the basepoint through the lower (resp.\ upper) half-space without
  passing through the projection plane.
\item A \emph{short arc} follows the double of $\PD$ from the basepoint
  until some crossing, where it jumps to the other strand in the
  crossing and then follows the double back 
  to the basepoint.
\end{enumerate}
There four types of arc are illustrated in Figure~\ref{fig.arcs}.
\end{definition}

\begin{figure}[ht]
\begin{center}
\begin{tabular}{ccc}
\includegraphics[height=0.2\textheight]{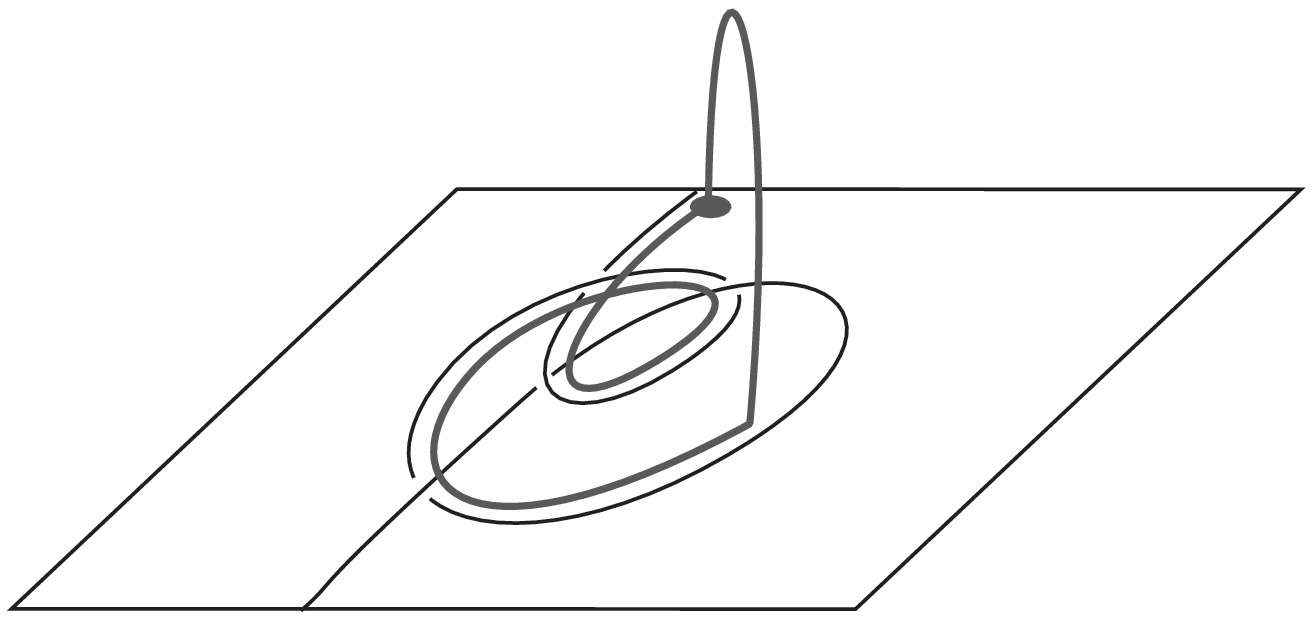}
& \quad\quad &
\includegraphics[height=0.2\textheight]{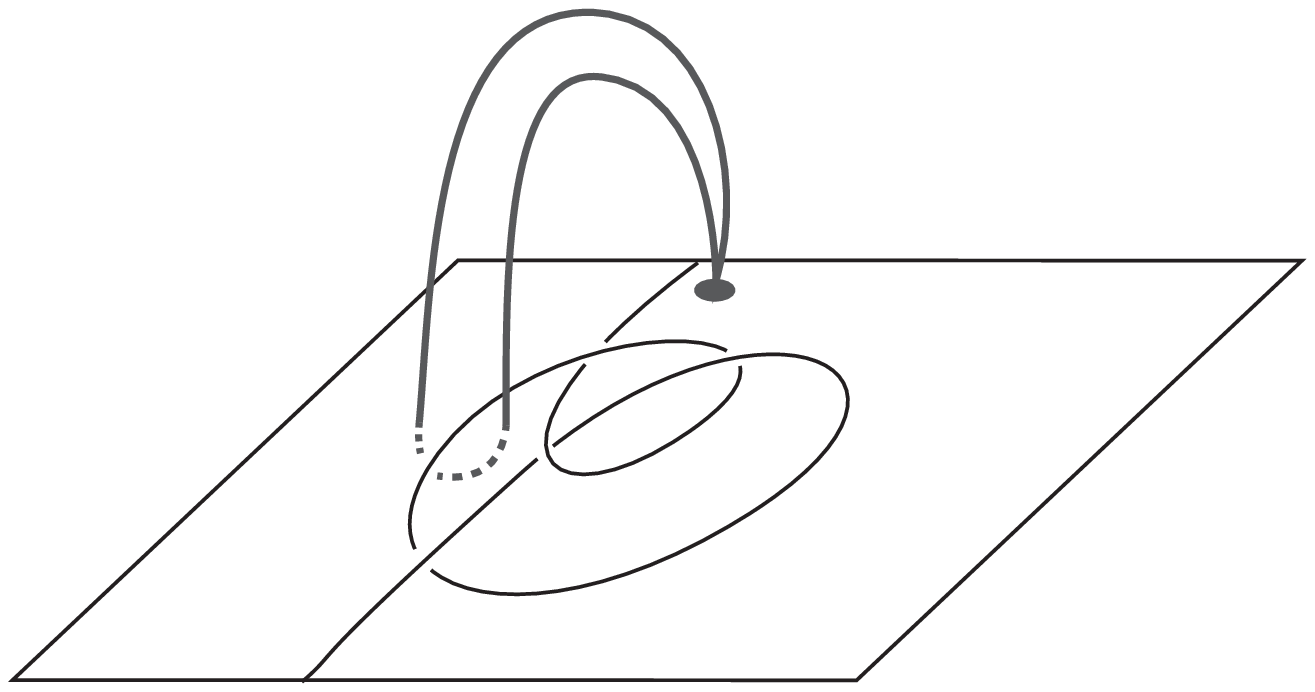}
\\
Wirtinger arc&&Wirtinger loop
\\
\includegraphics[height=0.2\textheight]{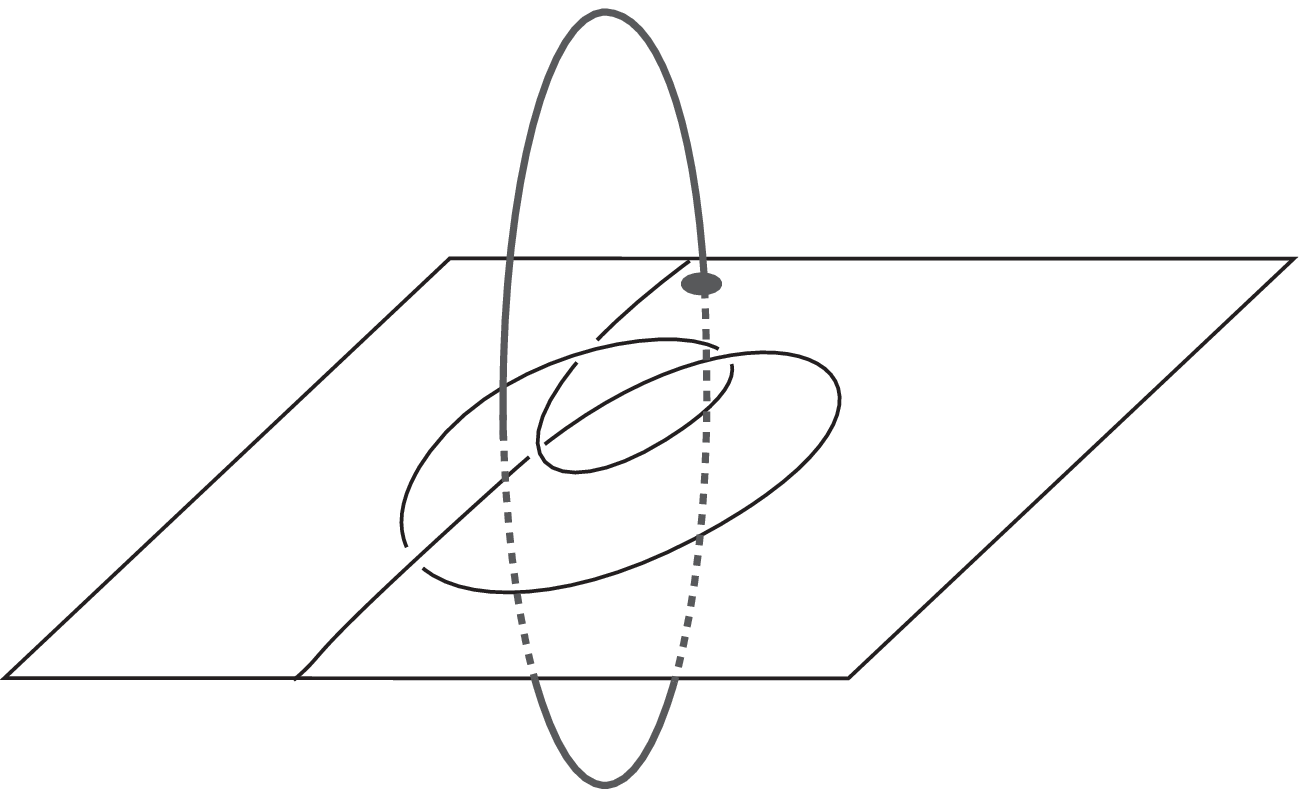}
&&
\includegraphics[height=0.2\textheight]{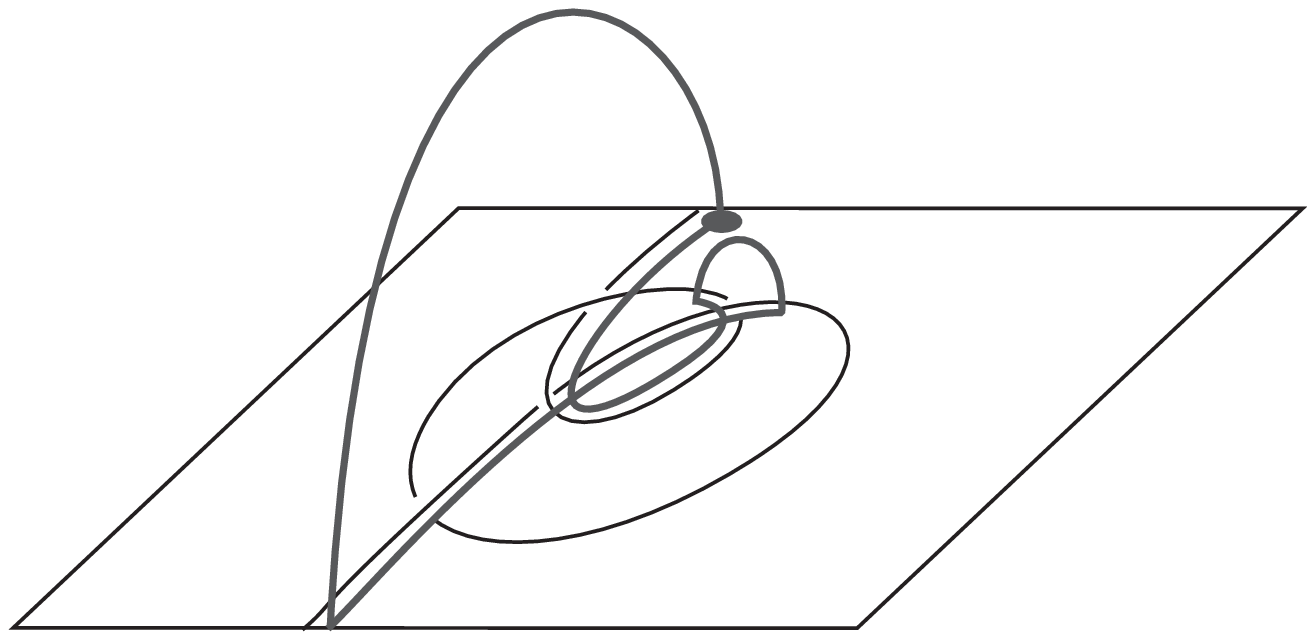}
\\
Dehn arc&& Short arc
\end{tabular}
\end{center}

\caption{Four types of loop in  a knot complement.}
\label{fig.arcs}
\end{figure}

These arcs are denoted by the letters $A$, $B$, $C$ and $D$ in~\cite{SY}.

\begin{theorem}
\label{thm.arcs}
If $\PD$ is a prime, reduced, alternating projection of a non-torus knot $K$,
then all Wirtinger arcs, Wirtinger loops, Dehn arcs and short arcs are
non-peripheral.
\end{theorem}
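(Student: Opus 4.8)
The plan is to translate the \emph{geometric} non-peripherality of the four arc families into a purely \emph{combinatorial} membership question inside the knot group $\pi_1(S^3 \setminus K)$, presented by its \emph{Dehn presentation}, and then to settle that question with small cancellation theory. Recall that an element of $\pi_1(S^3\setminus K)$ is peripheral exactly when its free homotopy class is conjugate into the peripheral subgroup $P=\langle m,\ell\rangle\cong\BZ^2$ generated by a meridian and a longitude, so for each arc type it suffices to show the corresponding group element is not conjugate into $P$. I would fix the Dehn presentation attached to $\PD$: its generators are the regions of the diagram (with the region containing the basepoint trivialized) and it has one length-four relator per crossing, reading the four regions incident to that crossing. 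Each arc of Figure~\ref{fig.arcs} then has a short, explicit spelling in these generators — a short arc or a Wirtinger arc records the regions passed while doubling $\PD$, whereas a Dehn arc or a Wirtinger loop records the one or two regions threaded while crossing between the upper and lower half-spaces. Producing these words is routine bookkeeping once orientation conventions are fixed.

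Next I would verify that, because $\PD$ is reduced, alternating and prime and $K$ is non-torus, this presentation satisfies \canc4 and \cant4. Since every crossing relator has length four, the associated van Kampen complex is assembled from square $2$-cells, and \cant4 is the requirement that every interior vertex of a reduced diagram have degree at least four; together these encode the combinatorial $[4,4]$, non-positively-curved ``square-tiling'' geometry. The alternating checkerboard coloring is what arranges the relators so that pieces stay short and the $[4,4]$ conditions hold, while reducedness, primeness and the non-torus hypothesis are exactly what exclude the bigons, nugatory crossings and standard torus-knot diagrams that would create short faces or degree-three vertices. This is the step that consumes the diagrammatic hypotheses of the theorem.

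With the \c4t4 structure in place, I would use its strong Greendlinger and Dehn-algorithm consequences to normalize words to geodesics and to solve the conjugacy problem, and then build the \emph{peripheral complex}: a planar $2$-complex whose edge-paths enumerate exactly the geodesic words conjugate into $P$. Geometrically $P$ is a flat $\BZ^2$ inside the group, its geodesics winding along the boundary torus, and the complex is designed to list all of them together with their cyclic conjugates. Finally, for each of the four arc families I would put its word into geodesic form and read off, directly against the peripheral complex, that it does not occur there — hence is not conjugate into $P$, hence is homotopically non-peripheral.

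The hard part will be the construction and, above all, the \emph{completeness} of the peripheral complex: proving that it captures \emph{every} geodesic conjugate into $P$ rather than some conspicuous subfamily. Because $\pi_1(S^3\setminus K)$ contains the abelian $P\cong\BZ^2$, the group is not word-hyperbolic, and the flat $P$ is precisely the locus where geodesics cease to be unique and the small cancellation combinatorics are most delicate; controlling the reduced van Kampen diagrams that can witness conjugacy into $P$ is the crux of the argument. Once completeness is established, checking the four specific arc families against the complex is a finite, local verification.
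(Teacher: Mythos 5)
Your strategy is essentially the paper's own: spell the four arc families as words in a Dehn-type presentation of the diagram, invoke Weinbaum's small cancellation structure, build a planar \pcomplex\ enumerating geodesic representatives of peripheral elements, and check the arc families against it. However, there are two concrete problems. First, the presentation you fix --- the Dehn presentation with the basepoint region trivialized --- does \emph{not} satisfy \canc4. Killing $X_0$ degenerates every relator at a crossing incident to the unbounded region to an effective length-three word (equivalently, keeping $X_0$ as a generator forces the length-one relator $X_0$), and \canc4\ requires all relators to have length exactly four. This is precisely why the paper works in the \emph{augmented} Dehn presentation $\A_{\PD}$ (keep the generator $X_0$, drop the relator $X_0$), which presents $\pi_1(S^3\setminus K)\ast\BZ$ and is the presentation to which Weinbaum's Theorem~\ref{thm.Wn} actually applies; peripherality questions are then transported back to the knot group via the free product normal form and Remark~\ref{rem.phi}. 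Relatedly, you attribute the small cancellation conditions partly to the non-torus hypothesis, but Weinbaum's theorem needs only prime, reduced and alternating --- it holds for the $(2,n)$ torus diagrams as well --- so your account of where the hypotheses are consumed is off.

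Second, the step you yourself identify as the crux --- \emph{completeness} of the \pcomplex, i.e.\ that every geodesic representing (a conjugate of) a peripheral element occurs as a path in it --- is left unproved, and it does not follow from generic Greendlinger/Dehn-algorithm or non-positive-curvature considerations, exactly because of the flat $\BZ^2$ you point to. The paper's argument for it is quite specific: (i) the fundamental block combinatorics (Lemma~\ref{lem:lc1}) give a geodesic embedding of the \pcomplex\ into the standard $2$-complex of $\A_{\PD}$ (Lemma~\ref{lem.geodesic}); (ii) the elements $\lambda^a\mu^b$ are identified with edge-paths from $(0,0)$ to $(an-b,an+b)$ (Lemma~\ref{lem:lg1}); and, decisively, (iii) the geodesic completion theorem (Theorem~\ref{thm:gcomp}) says that a geodesic word determines a \emph{unique} rectangle-bounded Dehn diagram in which \emph{all} geodesics equal to it in the group appear as corner-to-corner paths --- this is what upgrades (i) and (ii) to the full characterisation of Theorem~\ref{thm:lg2}. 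Without an ingredient playing the role of (iii), your final ``finite, local verification'' is not justified; and note it is not purely local even then, since for short arcs one must exclude conjugacy (not just equality) into the peripheral subgroup, which the paper does by comparing the South-West--North-East axis of the short-arc paths with the South-East--North-West axis of conjugated meridian powers.
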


Theorem~\ref{thm.1} immediately follows from Theorem~\ref{thm.arcs},
since all of the arcs that appear in any of the decompositions in
Theorem~\ref{thm.1} are of one of the four types in
Theorem~\ref{thm.arcs}.

The proof of Theorem~\ref{thm.arcs} uses the small cancellation
property of the
Dehn presentation of hyperbolic alternating knots. Curiously, our proof
uses an explicit solution to the conjugacy problem of the Dehn presentation
of a prime reduced alternating planar projection~$\PD$.  See 
Remark~\ref{rem.conjugacy} below.

\subsection*{Acknowledgements} 
A first draft of this paper was written in 2002 and was completed in 2007,
but unfortunately remained unpublished. During a conference in Waseda 
University in 2016 in honour of the 20th anniversary of the Volume Conjecture, 
an alternative proof of the results of our paper (using cubical complexes) 
was announced by Sakuma-Yokota~\cite{SY}, and with the same motivation as 
ours. We thank Sakuma-Yokota for their encouragement to publish our results, 
and the organisers of the Waseda conference (especially Jun Murakami) 
for their hospitality.


\section{Small cancellation theory}
\label{sec.scancel}

\subsection{The (augmented) Dehn presentation of a knot group}
\label{sub.Dehnp}

We  begin with a discussion of the augmented Dehn presentation of a knot 
diagram. As it turns out, the augmented Dehn presentation (defined below) is a small 
cancellation group and this structure provides a quick and implementable solution to its word problem. Background on small cancellation groups and combinatorial group theory can be found in \cite{LS}.

Throughout this paper we  implicitly {\em symmetrize} all group 
presentations. This means that when we write a set of relators $R$, we 
actually mean the set of all relators which can be obtained from $R$ by 
inversion and cyclic permutation.

Let $\PD$ be a $n$ crossing planar diagram of a link $L$. Of the $n+2$ 
regions 
of the diagram~$\PD$, exactly $n+1$ of these regions are bounded. Assign a 
unique label $1,2, \ldots , n+1$ to each of these bounded region and the 
label $0$ to the unbounded region. We identify each region with its label. 

We obtain a group presentation from the labelled diagram $\PD$ as follows. 
Take one generator   $X_i$ for each region $i=0,1,2, \ldots ,n+1$ of $\PD$. Take 
 one relator   $R_i$ for each of the $n$ crossings of $\PD$  which is read from the diagram thus
%
\begin{center}
 \labellist
\small\hair 2pt
\pinlabel $a$ at  35 71
\pinlabel $b$ at  72 71
\pinlabel $c$ at  72 35
\pinlabel $d$  at 35 35
\endlabellist
\raisebox{-3mm}{\includegraphics[height=25mm]{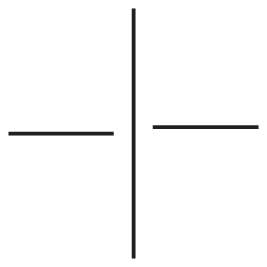}}
\quad
\raisebox{8mm}{$\leadsto \quad X_a X_b^{-1} X_c X_d^{-1}$}
\end{center}
%
If we choose a  base point above the projection plane, and we choose a point 
$p_i$ in the interior of each region $i$. Then the generator~$X_i$ can be 
described geometrically by a loop in the knot complement
which passes from the base point, downwards through the region  $p_i$ then 
back up to the base point through the point $p_0$ which lies in the 
unbounded region. Dehn showed that 
\[
\calD_{\PD} \overset{def}{=}  \left\langle \left. 
 X_0 , X_1, \ldots , X_{n+1} \; \right| \; R_1, R_2, \ldots R_n  , X_0   
 \right\rangle 
\] 
is a presentation for the knot group $\pi_1(S^3\setminus L ) $.  
We call this the {\em Dehn presentation} of $\pi_1(S^3\setminus L ) $ 
read from the diagram $\PD$. In what follows, we  use a minor
modification of the Dehn presentation which has better small cancellation
properties.

The {\em augmented Dehn presentation}, $\A_{\PD}$, of $\PD$ is the group 
presentation
$$   \A_{\PD} \overset{def}{=}  \left\langle \left.  
X_0 , X_1, \ldots , X_{n+1} \; \right| \; R_1, R_2, \ldots R_n     
\right\rangle .  $$

The augmented Dehn presentation arises as a Dehn presentation of a link.
Given a labelled link diagram $\PD$,  construct a new labelled link
diagram $\PD \cup \O$ by adding a zero-crossing component $\O$, 
which bounds $\PD$. This is called the \emph{augmented link diagram}.
The augmented Dehn presentation of $\PD$ is a 
presentation for the for the {\em augmented link group}    
$\pi_1(S^3\setminus(K  \cup \O)) $,  i.e.,
\begin{equation}\label{ahj}
\A_{\PD} \cong \calD_{\PD \cup \O} \cong \pi_1(S^3\setminus K) \ast \Z \,.   
\end{equation}

We will solve the word problem in $\calD_{\PD}$ by solving it in  $\A_{\PD}$. For completeness, let us say a few words about why it is sufficient to solve the word problem in $\A_{\PD}$. This is a consequence of some standard facts about group presentations that can be found in, for example, \cite{LS}. 
Let 
$\calP_G = \left\langle \left.  g_1, \ldots , g_{k} \; \right| \; 
r_1, \ldots r_j     \right\rangle  $
and 
$\calP_H = \left\langle \left.  h_1, \ldots , h_{l} \; \right| \; 
s_1, \ldots s_m     \right\rangle  $
be presentations for groups $G$ and $H$ respectively. Then the 
{\em standard presentation}, which we  denote by  
$\calP_{G}\ast \calP_{ H}$,  for the free product $G\ast H$ is
 
$$
\calP_{G}\ast \calP_{ H} = \left\langle \left.  g_1, \ldots , g_{k},h_1, 
\ldots , h_{l} \; \right| \; r_1, \ldots r_j  , s_1, \ldots s_m   
\right\rangle  .
$$
A standard consequence of the {\em normal form for 
free products} (again see \cite{LS}) is that with $\calP_G$, $\calP_H$ and $\calP_{G}\ast \calP_{ H}$  as above, if $w$ is a word in the generators $g_1, \ldots , g_{k}$ and their inverses, then $w=_G 1$ if and only if $w=_{G\ast H} 1$. Thus, by \eqref{ahj}, the word problem in $\calD_{\PD} \cong \pi_1(S^3\setminus K)$ can be solved by the word problem in $\A_{\PD} \cong \pi_1(S^3\setminus K) \ast \Z$.

An an explicit isomorphism of 
the augmented Dehn presentation with a standard presentation  for the 
free product $\pi_1(S^3\setminus K) \ast \Z$ is given by
\begin{equation}
\label{eq.phi}
\phi : \A_{\PD} \rightarrow   \calD_{\PD} \ast \langle Y  | \;\;  \rangle    
\end{equation}   
where
\[
\phi : X_i \mapsto 
\left\{ \begin{array}{ll}
Y & \text{if } i=0 \\ 
X_iY^{-1} & \text{otherwise }
\end{array}\right. .
\]
 Geometrically, $\phi$  
corresponds to isotoping the component $\O$ of the 
augmented link in $S^3$ away from the subdiagram $\PD$ so that it bounds a disc 
in the projection plane. 

\begin{remark}
\label{rem.phi}
Let $\iota: \calD_{\PD} \to \calD_{\PD} \ast \langle Y  | \;\;  \rangle$
denote the natural inclusion.
Given a projection $l$  of a loop $\ell \in \pi_1 (S^3\setminus K)$ in 
the diagram 
$\PD$,  we can read off a representative $\phi^{-1} (\iota (w))$ as follows:  
follow the loop $l$ from its basepoint in the direction of its orientation. 
When $l$ ``passes downwards'' through a region $i$ of $\PD$ assign a generator 
$X_i$; and whenever $l$ ``passes upwards'' through a region $i$ of $\PD$ 
assign a generator $X_i^{-1}$.   
The  word thus obtained clearly represents the loop $\ell$.
Thus, $w \neq_{\pi_1(S^3\setminus K)} 1$ if and only if
$\phi^{-1} (\iota (w)) \neq_{\A_{\PD}} 1$.
\end{remark}

\subsection{Square and grid presentations}
\label{sub.squaregrid}

The augmented Dehn presentation of a 
prime, reduced, alternating knot diagram has  \emph{small cancellation}
properties, as was first observed by Weinbaum in \cite{Wn}. 

Let $G= \langle X | R \rangle $ be a symmetrized group presentation.  
We call a non-empty word $r$  a {\em piece} with respect to $R$ if there 
exist distinct words $s,t \in  R$ such that $ s = r u$ and $t = rv$. 
\begin{definition}
\label{def.squaregrid}
\rm{(a)} A symmetrized presentation $\la X | R\ra$ is called a {\em 
\squarep  } 
if it satisfies the following two small cancellation conditions:
\begin{Conc4}
All relators have length four and no defining relator is a product of 
fewer than four pieces.
\end{Conc4}
\begin{Cont4}
Let $r_1 , r_2$ and $r_3$ be any three defining relators such that no two 
of the words are inverses to each other, then one of $r_1 r_2$, $r_2 r_3$ 
or $r_3 r_1$ is freely reduced without cancellation.
\end{Cont4}
\rm{(b)} A symmetrized presentation $\la X | R\ra$ is called a {\em 
\gridp } if it is a \squarep\ and in addition $X$ is colored by two colors
(black or white) and every relator alternates in the
two colors and in taking inverse.
\end{definition} 

\begin{remark}
\label{rem.diffnames}
There does not appear to be a standard terminology of the above definition.
In \cite{Wn}, Weinbaum calls \squarep s $C''(4)-T(4)$ presentations.
In \cite{Jg1,Jg2}, Johnsgard uses the term {\em parity} to denote
the black/white coloring of a \gridp . In \cite[Defn.3.1]{Wi1} and 
\cite[Defn.2.2]{Wi2}, Wise uses the
terms squared presentations and VH presentations for our \squarep s
and \gridp s.
\end{remark} 

We may depict a relator $r$ of a grid presentation by a Euclidean
square as follows: 
\[
\labellist
\small\hair 2pt
\pinlabel $c$ at  54 104
\pinlabel $b$ at  102 53
\pinlabel $a$ at  54 5
\pinlabel $d$  at 5 53
\endlabellist
\raisebox{8mm}{$a b^{-1} c d^{-1} \quad \longleftrightarrow  \quad $}\includegraphics[height=20mm]{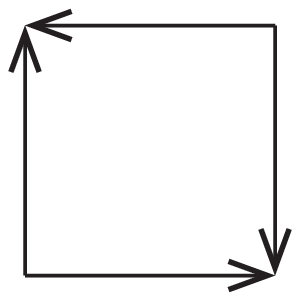}
\]

It is easy to see that in a grid presentation the following holds:
\begin{itemize}
\item
Relator squares have oriented edges, labelled from $X$. There are two
sinks and two sources in each relator square.
\item
We call a two letter subword of a relator a {\em pair}.  The 
\canc4 condition says that a pair uniquely determines a relator  up 
to cyclic permutation and inversion.
\item
\cant4 says that if $ab$ and $b^{-1}c$ are pairs then $ac$ is not.
\item
If $a$,  $b$ and $c$ are 
letters such that $ab$ and $b^{-1}c$ are both pairs (with $b\neq c$), 
then the word $ac$ is called a  {\em sister-set}. By the $T(4)$ condition, 
no pair is a sister-set.
\item
The edges of a relator square have an additional coloring: they are 
vertical or horizontal. Moreover, going around a relator square we
alternate between black and white.
\item We can invoke a convention that the black and white colorings correspond to horizontal and vertical line placement in our drawings of relator squares.

\item
A rotation or  reflection of a relator square corresponds to the cyclic
permutation or inversion of a relator. 
\end{itemize}

We can now state Weinbaum's theorem.

\begin{theorem}
\label{thm.Wn}\cite{Wn}
The augmented Dehn presentation of a 
prime, reduced, alternating knot diagram is a \gridp .
\end{theorem}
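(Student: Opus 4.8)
The plan is to verify directly that the augmented Dehn presentation $\A_{\PD}$ of a prime, reduced, alternating diagram satisfies every clause of Definition~\ref{def.squaregrid}, working entirely from the local picture of relators at crossings. Recall that each relator $R_i = X_a X_b^{-1} X_c X_d^{-1}$ is read from a single crossing, where $a,b,c,d$ are the four regions meeting at that crossing in cyclic order. Every relator therefore has length exactly four, which disposes of the length requirement of \canc4 immediately. The content of the theorem is that the \emph{alternating} and \emph{reduced} hypotheses force the remaining small-cancellation and coloring conditions, so the bulk of the work is organizing a careful local-to-global analysis of how two relators (crossings) can share a piece.

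First I would establish the key combinatorial dictionary: a \emph{piece} is a common initial segment of two distinct (symmetrized) relators, so pieces of length $\ge 2$ correspond to two distinct crossings that share an ordered pair of adjacent regions $X_a X_b^{-1}$ (or a cyclic/inverse variant). The crux is to show that in a prime reduced alternating diagram no piece has length $\ge 2$, which gives the ``no relator is a product of fewer than four pieces'' part of \canc4 for free (each relator, being a product of length-one pieces only, needs all four letters). To prove pieces have length one, I would argue that a shared length-two subword would mean two distinct crossings are incident to the same pair of regions along the same two edges; I would then show that such a configuration produces either a nugatory crossing (violating \emph{reduced}) or a decomposition of the diagram into a connected sum (violating \emph{prime}), using the standard fact that in an alternating diagram the regions two-color as a checkerboard and adjacent regions alternate over/under. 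This is where the topological hypotheses genuinely enter, and it is the step I expect to be the main obstacle, since it requires ruling out all ways two crossings can ``share too much'' and translating each into a failure of primeness or reducedness.

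Next I would verify \cant4. Using the dictionary above, $ab$ and $b^{-1}c$ being pairs means region $b$ is incident to two crossings, and the claim is that at least one of the three products $r_1r_2, r_2r_3, r_3r_1$ formed from three pairwise non-inverse relators is reduced. I would translate this into the geometric statement that one cannot have three crossings cyclically sharing regions so that all three consecutive products cancel; a length-four cancellation pattern among three relators would again force a forbidden short cycle in the diagram, incompatible with primeness. Here I would lean on the checkerboard coloring: since each crossing alternates the two checkerboard colors around it, the incidences of regions to crossings are constrained enough that a $T(4)$ violation would produce a region bounded by only two edges or a bigon, which a reduced prime alternating diagram forbids.

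Finally, for the \gridp\ structure I would produce the two-coloring explicitly from the checkerboard coloring of the regions of $\PD$: color each generator $X_i$ black or white according to the checkerboard class of region $i$. The alternating condition on the diagram then guarantees that reading any relator $X_a X_b^{-1} X_c X_d^{-1}$ around a crossing alternates between the two checkerboard classes, which is exactly the requirement that each relator alternate in the two colors and in taking inverses. I would remark that the sign pattern (the alternation of $X^{+1}$ and $X^{-1}$ in $R_i$) is built into the Dehn relator convention, so once the color alternation is checked the grid conditions of Definition~\ref{def.squaregrid}(b) hold. Assembling these verifications—\canc4 from piece-length-one, \cant4 from the absence of short cancellation cycles, and the coloring from the checkerboard—gives that $\A_{\PD}$ is a \gridp, completing the proof.
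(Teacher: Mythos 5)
The paper does not actually prove Theorem~\ref{thm.Wn}; it is quoted from Weinbaum \cite{Wn}, so your proposal has to be judged against the standard argument. Your skeleton is the right one (length four is trivial, the two-coloring comes from the checkerboard coloring, and the real content is in $C''(4)$ and $T(4)$), and your coloring step is essentially correct --- although the alternation of colors around a relator holds for \emph{any} diagram, not because of the alternating hypothesis. The genuine gap is in your $C''(4)$ step. Your dichotomy ``a length-two piece forces a nugatory crossing or a connected sum'' cannot handle the main case, which is ubiquitous and topologically harmless: \emph{every} edge $e$ of the diagram has two distinct crossings at its ends, and both of those crossings are incident to the same pair of regions, namely the two regions $i,j$ flanking $e$. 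This violates neither primeness nor reducedness, so no purely topological argument disposes of it. What actually rules out a length-two piece there is the sign convention of the Dehn relators combined with alternation: if $e$ is an over-end at a crossing, the symmetrized relator of that crossing contributes the pairs $X_iX_j^{-1}$ and $X_jX_i^{-1}$, while if $e$ is an under-end it contributes $X_i^{-1}X_j$ and $X_j^{-1}X_i$; since in an alternating diagram $e$ is an overpass at one end and an underpass at the other, the two sets of pairs are disjoint. (Primeness is what excludes the other case, two regions sharing two \emph{distinct} edges, and reducedness excludes degenerate pieces coming from a single crossing.) A telltale sign that your argument cannot be completed as written: it never uses the exponents in the relators, so it would apply verbatim to prime, reduced, \emph{non-alternating} diagrams, where $C''(4)$ genuinely fails (an edge whose two ends are both overpasses puts the same signed pair into two distinct relators).

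Your $T(4)$ step also rests on a false claim: prime, reduced, alternating diagrams \emph{do} contain bigons (the standard trefoil and figure-eight diagrams, and any twist region), so deriving a bigon from a hypothetical $T(4)$ violation yields no contradiction. The correct mechanism needs neither primeness nor alternation: a $T(4)$ violation produces pairs on regions $\{i,j\}$, $\{j,k\}$ and $\{i,k\}$, i.e.\ three regions pairwise adjacent across edges; since adjacent regions have opposite checkerboard colors, $i$ and $k$ would have the same color and so could not be adjacent. Equivalently, once the grid coloring of Definition~\ref{def.squaregrid}(b) is established, $T(4)$ is a formal consequence of bipartiteness (a corner joins generators of opposite colors, so three corners cannot close up into a cycle); geometrically, a simple closed curve passing through three pairwise adjacent regions would meet the diagram in three points, contradicting the parity forced by the Jordan curve theorem. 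So the correct division of labor is: reducedness gives distinct regions at each crossing, primeness kills doubly-shared edges, alternation (through the relator signs) kills the shared-edge pairs, and the checkerboard structure gives both the grid coloring and, for free, $T(4)$.
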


In \cite{LS} Lyndon and Schupp show that square and grid presentations have 
have solvable word and conjugacy problems.
Since the appearance of that work, polynomial time algorithms have  been given for the 
word (see \cite[Sec.7]{Jg1}))  and conjugacy problems (\cite{Jg1}) of these groups. 
We use these more efficient algorithms here. 

\subsection{The word problem for \squarep s}
\label{sub.wordp}

In this section we  recall the solution to the word problem of
\squarep s. 
To any group presentation $G= \langle X | R \rangle $ we can associate 
a {\em standard 2-complex} $K$ in the usual way: $K$ consists of one 
0-cell, one labelled 1-cell for each generator and one 2-cell for each 
relator, where the 2-cell $D_r$ representing the relator $r \in R$ is 
attached to the 1-skeleton, $K^{(1)}$, by a continuous map which identifies the boundary 
$\partial D_r$ with a loop representing $r$ in the 1-skeleton.
We impose a piece-wise Euclidean structure on the standard 2-complex and 
set all 1-cells to be of unit length.

A  word $w$ represents the identity in  $G$ if and only if there 
is a  simply connected planar 2-complex $\PD$, and a map
$\phi : (D,\partial D) \rightarrow (K, K^{(1)})$ such that the 0-cells are 
mapped to  0-cells, open $i$-cells are mapped to open $i$-cells, for 
$i=1,2$ and $\partial D$ is mapped to the loop representing $w$ in $K^{(1)}$.
Such a 2-complex, labelled in the natural way, is called a {\em Dehn diagram}. 

Throughout this text we  use two concepts of labels of edge-paths of 
the standard 2-complex, \pcomplex\ (introduced below)
or Dehn diagram. The label of 
an edge-path is the sequence of letters determined by the edge-path, 
where travelling along an edge labelled $a$ contributes the letter $a$.  
This is distinct from the word labelling an edge-path, which is the word 
in the group determined by the path, where travelling along an edge 
labelled $a$ against the orientation contributes the letter $a^{-1}$, 
and travelling with the orientation, the letter $a$. 

A word in a group presentation is said to be {\em geodesic} if it contains 
the least number of letters over all representatives of the same word, 
i.e., $w$ is geodesic if $|w|=\min \{ |w'| \; | \; w=_G w' \}$. A geodesic 
word represents the identity if and only if it is the empty word. A word 
in a group presentation is geodesic if and only if it labels a geodesic 
edge-path in the standard two complex  of the presentation.

A key result of small cancellation theory is the following
{\em Geodesic Characterisation Theorem}; see \cite[Sec.3]{Jg1} and also 
\cite[Lem.3.2]{Ka}. 

\begin{theorem}
\label{thm:GCT}
A word in a \squarep\ 
is geodesic if and only if it is freely reduced and contains no subword 
$x_1 \dots x_n$
which is part of a {\em chain}:
%
\[
\labellist
\small\hair 2pt
\pinlabel $x_1$ at  54 54
\pinlabel $x_2$ at  108 104
\pinlabel $x_3$ at  180 104
\pinlabel $x_{n-1}$  at 323 104
\pinlabel $x_n$ at  378 54
\endlabellist
\includegraphics[height=20mm]{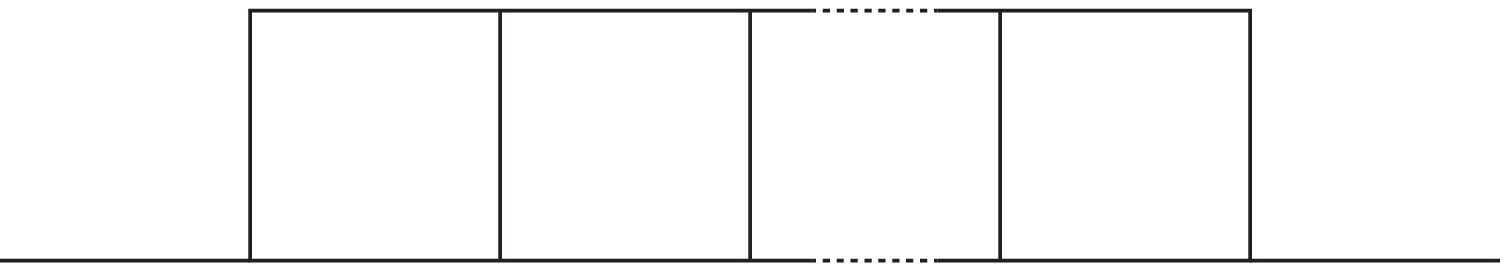}
\]

The  word $x_1 \dots x_n$ is called a {\em chain word}.
\end{theorem}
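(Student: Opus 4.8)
The plan is to prove both implications by combining van Kampen's lemma with the nonpositively curved geometry that the $C''(4)$ and $T(4)$ conditions impose on reduced diagrams. Throughout I think of a relator square as a genuine Euclidean unit square with right-angled corners, so that the standard $2$-complex of the presentation carries a piecewise-Euclidean metric in which every $2$-cell is a flat square, and I read geodesics off the combinatorics of this metric.

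The easy implication is that a word which is not freely reduced, or which contains a chain word, cannot be geodesic. Free reducibility is immediate. For a chain word $x_1\cdots x_n$ I would argue that the fan of relator squares defining the chain exhibits $x_1\cdots x_n$ as the ``long way around'' a band of squares: the first and last letters cross one and the same family of relator squares transversally, while the middle $n-2$ letters run along the far side of that band. Sliding the path across the band replaces $x_1\cdots x_n$ by the word of length $n-2$ read along the near side, so $w$ is not geodesic. (Concretely, for $n=3$ this is just the replacement of three sides of a single relator square by its fourth side.)

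For the converse — that a freely reduced word with no chain subword is geodesic — suppose not, and pick a geodesic $w'$ with $w=_G w'$ and $|w'|<|w|$. By van Kampen's lemma there is a reduced disk (Dehn) diagram $D$ with $\partial D$ labelled $w(w')^{-1}$; write $\partial D=\alpha\beta$, where $\alpha$ carries $w$ and $\beta$ carries $(w')^{-1}$. The $C''(4)$ condition forces every $2$-cell of $D$ to be a square whose four boundary edges are distinct pieces, and the $T(4)$ condition forces every interior vertex of a reduced diagram to have degree at least $4$; by the combinatorial Gauss--Bonnet theorem the interior of $D$ then carries nonpositive curvature, so $D$ is a nonpositively curved square complex. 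On such a diagram I would use the system of dual curves (hyperplanes): each dual curve is an embedded arc crossing a band of squares transversally and terminating at the two boundary edges dual to it, and — using reducedness together with $T(4)$ — no dual curve crosses a square twice and two distinct dual curves cross at most once. In particular every boundary edge is an endpoint of exactly one dual curve, so the dual curves induce a perfect matching on the $|w|+|w'|$ boundary edges of $D$.

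Since $|w|>|w'|$, a counting argument on this matching shows that some dual curve $\gamma$ has both endpoints on $\alpha$. Choosing $\gamma$ innermost (cutting off a minimal subarc of $\alpha$), I would argue that the subdiagram it bounds together with that subarc is a single band of squares crossed by $\gamma$, whose far boundary is the chosen subarc of $\alpha$; reading the labels along that subarc then yields a chain word contained in $w$, contradicting the hypothesis. Verifying that this band matches the combinatorial definition of a chain — in particular that the innermost choice really produces a single band, that the $T(4)$/sister-set condition forces precisely the local square pattern of the figure, and that free reduction of $w$ rules out the degenerate cases at the two ends — is the step I expect to require the most care, and is where the explicit small cancellation hypotheses are genuinely used; for these verifications I would follow the reduced-diagram analyses of \cite[Sec.3]{Jg1} and \cite[Lem.3.2]{Ka}.
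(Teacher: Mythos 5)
First, a point of reference: the paper does not actually prove this theorem. It is imported as a known result of small cancellation theory, with the proof deferred to Johnsgard \cite[Sec.~3]{Jg1} and Kapovich \cite[Lem.~3.2]{Ka}. So your attempt must stand on its own, and judged that way it has one genuine gap, located exactly where the hypotheses $C''(4)$ and $T(4)$ do their work. What you have is correct and well organised: the easy direction is complete (a chain of $n-2$ squares lets you slide $x_1\cdots x_n$ across the band to the length-$(n-2)$ word on the other side); and in the converse direction the setup is sound --- reducedness plus $C''(4)$ makes every $2$-cell a square meeting its neighbours along single edges, $T(4)$ forces interior vertices to have degree at least $4$, dual curves end on the boundary and pairwise cross at most once, and the count $|w|>|w'|$ does produce a dual curve $\gamma$ with both endpoints on $\alpha$. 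The gap is the final claim: that an \emph{innermost} such $\gamma$ cuts off exactly its own band, so that the flanking edges together with the subtended subarc of $\alpha$ read a chain word. You assert this and defer its verification to the very references the paper cites, which makes your argument a reduction to the literature rather than a proof; and this step is not bookkeeping --- innermostness and planarity alone do \emph{not} exclude squares trapped between the band of $\gamma$ and the subarc $\alpha'$, as one sees by attempting Gauss--Bonnet on that region (the inequality one gets is not yet a contradiction).

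The missing idea that closes the argument is a triangle-freeness statement for dual curves: in a nonpositively curved square disk diagram, no three dual curves pairwise cross (pairwise-crossing hyperplanes must cross inside a common cube, and a $2$-dimensional complex has no $3$-cubes; alternatively, three pairwise crossings bound a geodesic triangle with three right angles, which Gauss--Bonnet forbids). Granting this, your innermost step goes through: any square $T$ strictly between the band of $\gamma$ and $\alpha'$ carries two distinct dual curves; each of them meets the region in a single arc, and by innermostness neither arc can have both ends on $\alpha'$, so each must escape across the band and hence cross $\gamma$; together with their crossing inside $T$ this yields three pairwise-crossing dual curves, a contradiction. Hence the trapped region is empty, and the subword of $w$ formed by the two edges dual to the ends of $\gamma$ together with $\alpha'$ (note: those two flanking edges are part of the chain word, not just $\alpha'$) is precisely a chain word. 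With that lemma stated and proved, and the usual remarks about degeneracies (cancellation between $w$ and $w'$, non-disk diagrams, bands touching the boundary), your outline becomes a complete and self-contained proof --- indeed more than the paper itself provides.
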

 
\begin{remark}
Observe that the Geodesic Characterisation Theorem immediately provides 
a quadratic time solution for the word problem in \squarep s: Given a 
word $w$, freely reduce it to obtain a word $w'$. If $w'$ is the empty 
word then $w=_G 1$, otherwise search $w'$ for a chain word. If $w'$  
does not contain a chain word then $w' \neq_G 1$. If $w'$ does contain 
a chain word, replace it with the shorter word which bounds the 
``other side'' of the chain to obtain a shorter word $w''$. Repeat the 
above process with the word $w''$ in place of $w$.    
\end{remark}

\subsection{The \pcomplex}
\label{sub.log-cabin}

A reduced, prime,  alternating, oriented knot diagram gives rise
to a \gridp\ with solvable word problem; see Theorems \ref{thm.Wn} and 
\ref{thm:GCT}. This \gridp\ contains a {\em peripheral $\BZ^2$-subgroup} 
generated by the meridian $m$ and the longitude $l$ of the knot. Of course, 
a peripheral subgroup does not exist for a general \gridp . 

Theorem \ref{thm.arcs} requires us to solve the {\em peripheral word problem}. 
Following Johnsgard (see \cite[Sec.7]{Jg1}), we consider 
the (rather overlooked) \emph{\pcomplex}, and we discuss how it solves
the peripheral word and conjugacy problem. 

Let $\PD$ be a reduced, prime,  alternating, oriented knot diagram with $n$ 
crossings, and let $\A_{\PD}$ be its augmented Dehn presentation. Each relator of 
$\A_{\PD}$ is a word of length four whose exponents alternate in sign. We may 
think of the relators as $1\times 1$ Euclidean squares with directed and 
labelled edges. For convenience, we impose some conventions upon our 
construction. We  discuss the effect of these conventions in 
Remark~\ref{rem:lg} below.

From the base point of $\PD$ and in the direction of the orientation, walk 
around the diagram and  label the $n$ crossings of $\PD$ with $c_1, c_2, 
\ldots , c_{2n}$ in the order we meet them and  in such a way that the 
label $c_1$ is assigned to the first under crossing we meet. 
For example, for the $5_2$ knot we have:
%
\begin{center}
\labellist
\small\hair 2pt
\pinlabel $c_1$ at 95 95
\pinlabel $c_2$ at 175 21
\pinlabel $c_3$ at 207 58
\pinlabel $c_4$ at 182 143
\pinlabel $c_5$ at 40 140
\pinlabel $c_6$ at 80 112
\pinlabel $c_7$ at 177 120
\pinlabel $c_8$ at 232 50
\pinlabel $c_9$ at 156 35
\pinlabel $c_{10}$ at 67 138
\pinlabel $0$ at 210 160
\pinlabel $1$ at 50 115
\pinlabel $2$ at 68 64
\pinlabel $3$ at 200 25
\pinlabel $4$ at 225 100
\pinlabel $5$ at 110 142
\pinlabel $6$ at 160 77
\endlabellist
\includegraphics[height=5cm]{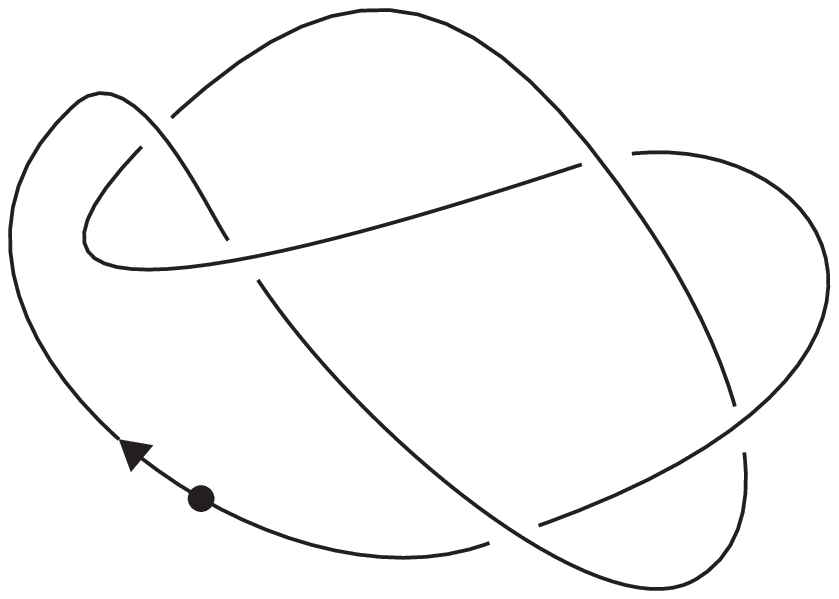}
\end{center}

We construct a $2n\times 1$ rectangle made out of $2n$ 
relator squares inductively as follows.
Position the relator square $C_1$ on the Euclidean plane in such a way 
that the label of the edge-path from $(0,0)$ to $(1,1)$ describes a loop 
which follows the knot through the undercrossing at $c_1$ (on the left is shown
the crossing $c_1$ and on the right is shown the relator square $C_1$):

\begin{center}
\labellist
\small\hair 2pt
\pinlabel $a$ at  35 71
\pinlabel $b$ at  72 71
\pinlabel $c$ at  72 35
\pinlabel $d$  at 35 35
\endlabellist
\raisebox{-3mm}{\includegraphics[height=25mm]{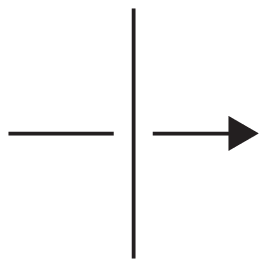}}
\quad
\raisebox{8mm}{$\leadsto$}
\qquad
\labellist
\small\hair 2pt
\pinlabel $c$ at  54 104
\pinlabel $b$ at  102 53
\pinlabel $a$ at  54 5
\pinlabel $d$  at 5 53
\endlabellist
\includegraphics[height=20mm]{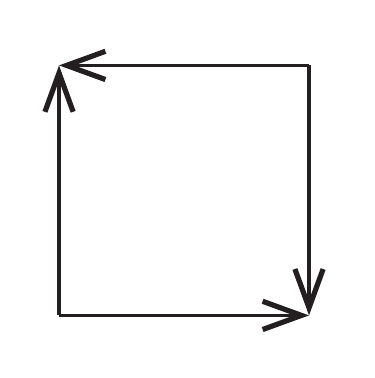}
\quad
\raisebox{8mm}{or}
\quad
\labellist
\small\hair 2pt
\pinlabel $b$ at  54 104
\pinlabel $c$ at  102 53
\pinlabel $d$ at  54 5
\pinlabel $a$  at 5 53
\endlabellist
\includegraphics[height=20mm]{lc1}
\end{center}

Suppose we have placed a relator square $C_k$ (which  arises from the 
crossing $c_k$). The relator squares $C_k$ and  $C_{k+1}$ have  exactly two 
edge-labels in common (since the diagram $\PD$ is prime and reduced).   
Identify the right edge of $C_k$ with the unique edge of $C_{k+1}$ which 
has the same label in a way that  preserves the orientation of the edges. 
This gives a $(k+1)\times 1$ rectangle.
Continue this process until  we have added the relator square $C_{2n}$. 
  
We call such a  $2n\times 1$ rectangle of relator squares a 
{\em fundamental block} of $\PD$. For example, the fundamental block of 
the $5_2$ knot above is
%
\begin{center}
\labellist
\small\hair 2pt
\pinlabel $6$ at  50 104
\pinlabel $0$ at  121 104
\pinlabel $6$ at  194 104
\pinlabel $0$  at 266 104
\pinlabel $1$ at   341 104
\pinlabel $6$ at   412 104
\pinlabel $0$ at  487 104
\pinlabel $6$  at 558 104
\pinlabel $0$ at  634 104
\pinlabel $1$  at 698 104
\pinlabel $1$ at  50 4
\pinlabel $6$ at  121 4
\pinlabel $0$ at  194 4
\pinlabel $6$  at 266 4
\pinlabel $0$ at   341 4
\pinlabel $1$ at   412 4
\pinlabel $6$ at  487 4
\pinlabel $0$  at 558 4
\pinlabel $6$ at  634 4
\pinlabel $0$  at 698 4
\pinlabel $5$ at  7 54
\pinlabel $2$ at  81 54
\pinlabel $3$ at  153 54
\pinlabel $4$  at 224 54
\pinlabel $5$ at  296  54
\pinlabel $2$ at   370 54
\pinlabel $5$ at  442 54
\pinlabel $4$  at 514 54
\pinlabel $3$ at  586 54
\pinlabel $2$  at 657 54
\pinlabel $5$  at 728 54
\endlabellist
\includegraphics[height=20mm]{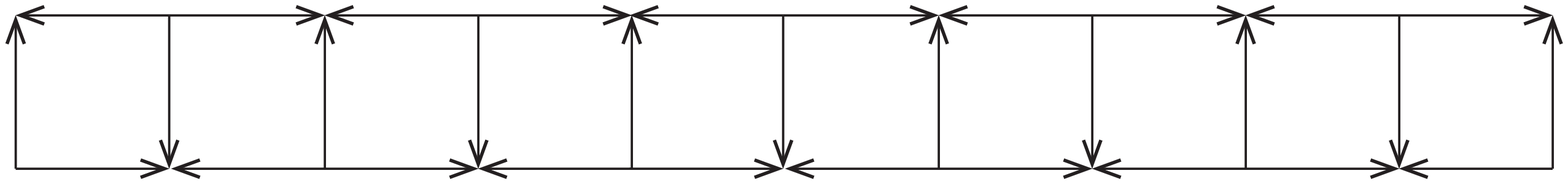}
\end{center}

Observe that the fundamental block has oriented edges and its vertices are 
either sinks or sources. We will often simplify figures by drawing sinks as 
thickened black vertices. This determines the orientations of the edges.

Notice in the example above that the word labelling  the top edge of the 
fundamental block is a cyclic permutation of the  word labelling  the 
bottom edge of the fundamental block, and that the labels and orientations on 
the left and right edges coincide. This observation holds in general and it 
allows us to piece together the fundamental blocks in a way that tiles 
the plane.

\begin{lemma}
\label{lem:lc1}
In the fundamental block of a  reduced, prime,  alternating, oriented knot 
diagram $\PD$,
\begin{enumerate}
\item the label and orientation of the rightmost and leftmost vertical 
edges of the fundamental block coincide;
\item the label and orientation on the top of the relator square $C_i$ 
is the same as the label and orientation on the bottom of the relator 
square $C_{i+1}$, where  the indices are taken modulo $2n$.
\end{enumerate}
\end{lemma}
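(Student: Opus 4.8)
The plan is to separate the genuinely combinatorial content—two identifications of \emph{labels}—from the orientations, which I will treat as a consequence. Traverse $\PD$ from the basepoint, and for $1\le k\le 2n$ let $\alpha_k$ denote the arc of the knot running from crossing $c_k$ to crossing $c_{k+1}$, indices taken mod $2n$, so that $\alpha_{2n}$ runs from $c_{2n}$ back to $c_1$. Each $\alpha_k$ is a crossing-free edge of the $4$-valent diagram graph and so is flanked by exactly two regions; relative to the direction of travel, call them the \emph{left} and \emph{right flank} of $\alpha_k$. These two flanks are precisely the regions that $c_k$ and $c_{k+1}$ share along their connecting arc, and hence are the labels the gluing step identifies. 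I would first reduce the lemma to the two assertions that (a) the top label of $C_k$ equals the bottom label of $C_{k+1}$, and (b) the leftmost and rightmost vertical labels agree; the orientations will follow afterwards.

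The argument rests on two structural facts. First (D1): the standard checkerboard colouring of the regions of $\PD$ gives every region one of two colours with adjacent regions coloured oppositely, and by Weinbaum's Theorem~\ref{thm.Wn} this is exactly the black/white (horizontal/vertical) colouring of the \gridp; thus in every relator square the two like-coloured regions are the top and bottom edges and the two oppositely-coloured regions are the left and right edges. Since $\alpha_k$ is an edge of the diagram, its two flanks receive opposite colours, so one flank is ``horizontal'' and the other ``vertical''. Second (D2): using the placement rule and the reading recipe of Remark~\ref{rem.phi}, the square $C_k$ is normalised so that the two regions flanking the incoming arc $\alpha_{k-1}$ occupy the bottom and left edges of $C_k$, while the two flanking the outgoing arc $\alpha_k$ occupy the top and right edges; within each pair, the horizontally-coloured flank is the horizontal edge and the vertically-coloured flank is the vertical edge.

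Granting (D1)--(D2), both parts follow. For part (2): the top edge of $C_k$ is the horizontal flank of the outgoing arc $\alpha_k$, while the bottom edge of $C_{k+1}$ is the horizontal flank of the incoming arc of $c_{k+1}$, which is again $\alpha_k$; these are the same region, so the labels agree—and simultaneously the right edge of $C_k$ and the left edge of $C_{k+1}$ are both the vertical flank of $\alpha_k$, which reconciles (D2) with the construction's gluing rule. For part (1): the left edge of $C_1$ is the vertical flank of the incoming arc $\alpha_{2n}$ and the right edge of $C_{2n}$ is the vertical flank of the outgoing arc $\alpha_{2n}$, i.e.\ the same flank of the same arc, so the leftmost and rightmost labels agree. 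The orientations then follow from the labels: every edge of the fundamental block is a generator of $\A_{\PD}$ carried by a single oriented $1$-cell, and the edges compared are of the same type (both horizontal in (2), both vertical in (1)), so equal labels force compatible orientations; for the global identification in part (1) the consistency of the orientation-preserving gluings around the whole knot uses that the over/under type alternates (the diagram is alternating) together with the evenness of the number $2n$ of squares, so that the alternation closes up.

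I expect the main obstacle to be establishing (D2) rigorously. The placement rule pins down only $C_1$ directly, and propagating the normalisation requires a careful local analysis at each crossing that tracks at once (i) the over/under type, which flips at every step because $\PD$ is alternating, (ii) the before/after and left/right position of each of the four quadrant-regions relative to the direction of travel, and (iii) the source/sink and black/white data of the grid presentation. Here primeness and reducedness are needed to keep the local picture nondegenerate—so that the two flanks of each arc really are the shared pair and no relator square collapses—while the alternating hypothesis is precisely what makes the bipartite grid structure, and with it the ``before $\to$ bottom/left, after $\to$ top/right'' normalisation, propagate consistently the whole way around the diagram.
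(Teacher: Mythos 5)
There is a genuine gap, and you have in fact pointed at it yourself. Your reduction is sound as far as it goes: (D1) (the grid colouring is the checkerboard colouring, so each relator square has its two like-coloured regions on top/bottom and the other two on left/right) is consistent with the paper's conventions, and the derivation of both parts of the lemma from (D1)+(D2) is correct, since the flanks of the incoming and outgoing arcs at a crossing partition the four quadrant-regions. But (D2) \emph{is} the lemma: saying that every $C_k$ is normalised with the incoming flanks on bottom/left and the outgoing flanks on top/right is exactly the statement that tops match bottoms and the block closes up. You assert it, flag it as ``the main obstacle,'' and sketch only a list of things one would need to track (over/under type, quadrant positions, sink/source data) without supplying the mechanism that actually forces the normalisation to propagate. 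As written, the proposal proves the easy implication (D2) $\Rightarrow$ Lemma and leaves the hard direction untouched.

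The missing mechanism in the paper's proof is the small cancellation condition $C^{\prime\prime}(4)$. Inductively, $C_k$ and $C_{k+1}$ share exactly two labels (the flanks of the connecting arc), which are adjacent edges in each square, so after gluing along the common vertical edge there are a priori \emph{two} placements of $C_{k+1}$: the second shared label can sit on its top or on its bottom. Both placements are compatible with orientation-preserving gluing, because the sink/source type of the shared corner of $C_{k+1}$ is not known in advance. What rules one out is that a pair (two-letter subword of a relator) uniquely determines its relator up to cyclic permutation and inversion: the forbidden placement would make $C_{k+1}$ contain a pair that already occurs (up to inversion) in $C_k$, contradicting $C_k\neq C_{k+1}$. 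The same argument closes the block up at $C_{2n}$--$C_1$, giving part (1). Your proposed purely diagrammatic propagation (exploiting alternation of over/under crossings directly) may well be workable---it is morally the content of Weinbaum's theorem---but it is precisely the ``careful local analysis'' you deferred, so until it is carried out the proof is incomplete. (Separately, your orientation argument should be tightened: equal labels do not by themselves force compatible orientations; what does is that relator exponents alternate in sign, so every corner is a sink or a source, and the sink/source pattern propagates across gluings---with the evenness of $2n$ giving the match in part (1).)
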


We defer the proof of this lemma until the end of Section 
\ref{sub.propertieslog}. 

Using Lemma~\ref{lem:lg1} we can piece together 
together the fundamental blocks according to the following pattern,
\begin{center}
\includegraphics[height=0.10\textheight]{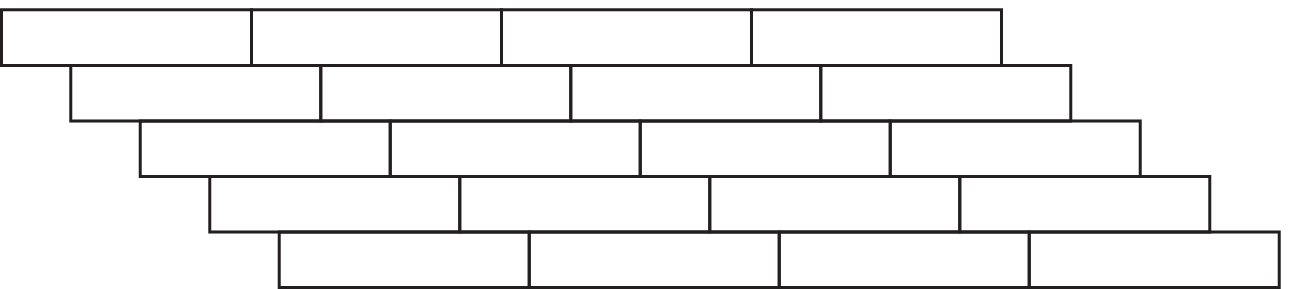}
\end{center}
and tile the whole plane by relator squares. 

\begin{definition}
\label{def.log-cabin}
We call the resulting $2$-dimensional CW complex the 
{\em \pcomplex}. 
\end{definition}
For example, a portion of the \pcomplex\ for the $5_2$ knot is given by:
\begin{center}
\includegraphics[height=0.35\textheight]{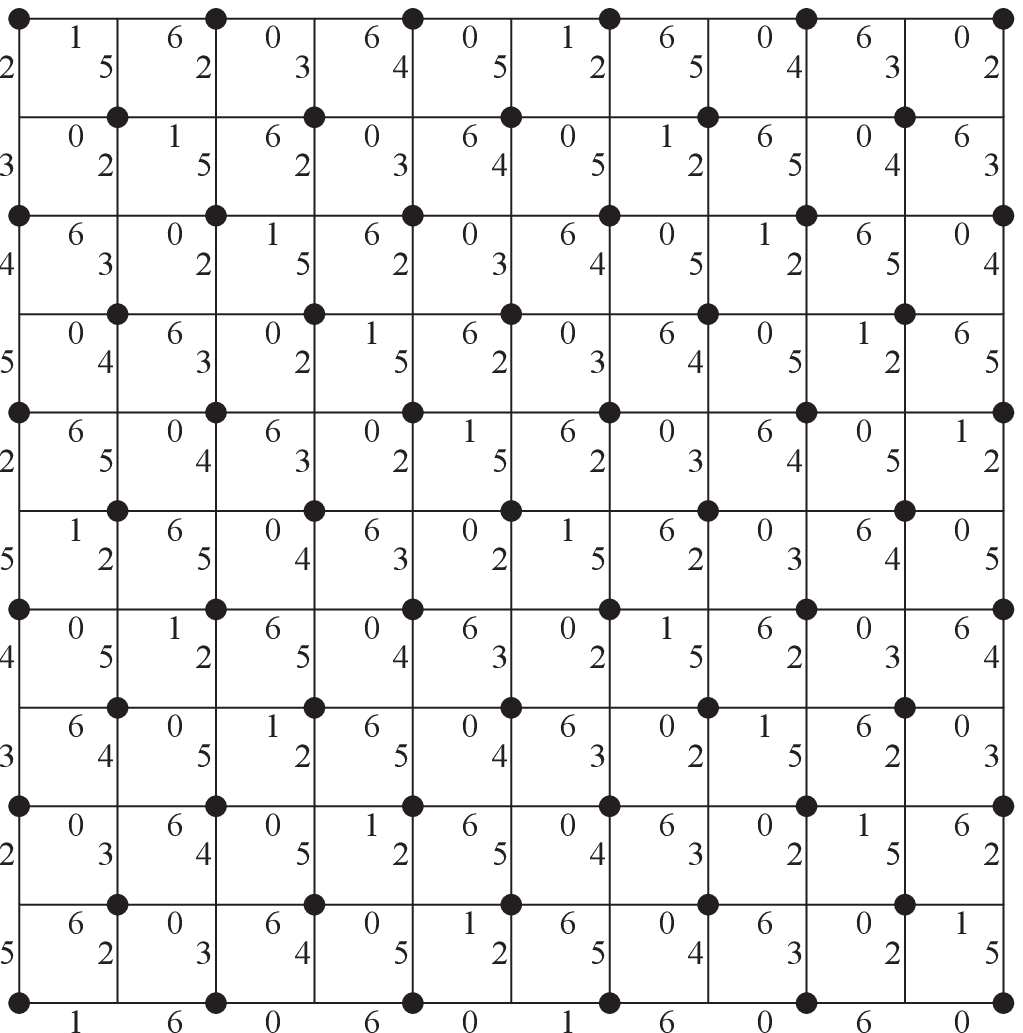}
\end{center}

\begin{remark}
\label{rem:lg}
Several choices and conventions were made in the construction of the 
\pcomplex. Namely the choice of base point on $\PD$,  the label $c_1$ was 
assigned to the first under crossing we met, and the positioning of the 
first relator square $C_1$. 
 It is clear from the construction of the complex that a different choice 
of base point (as well as orientation) and  a different placement of $C_1$ 
on the Euclidean plane would result in a \pcomplex\ which is 
isometric to the one constructed here.   We  discuss this in more detail 
in Section~\ref{sub.logcabin}. 

All of the arguments presented here can be made  with any construction of 
the \pcomplex, however the directions specified in the statements of 
results and proofs in this paper may change.  
\end{remark}

\subsection{Some properties of the \pcomplex}
\label{sub.propertieslog}

By construction, the \pcomplex\ embeds in the standard 2-complex  
of the augmented Dehn presentation. In fact it embeds geodesically:

\begin{lemma}
\label{lem.geodesic}
The \pcomplex\ of $\PD$  embeds geodesically in the standard 
2-complex of the augmented Dehn presentation $\A_{\PD}$.
In particular, the word labelling any geodesic edge-path in the \pcomplex\ 
is a geodesic word in the augmented Dehn presentation.
\end{lemma}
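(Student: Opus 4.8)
The plan is to reduce the lemma to the Geodesic Characterisation Theorem (Theorem~\ref{thm:GCT}). By construction there is a natural cellular, label- and orientation-preserving map $\Phi\colon \pcomplex \to K$ onto the standard $2$-complex $K$ of $\A_{\PD}$, sending each relator square of the \pcomplex\ to the $2$-cell of the corresponding relator. For our purposes the assertion that $\Phi$ is a \emph{geodesic embedding} is exactly the ``in particular'' clause: if $\gamma$ is a geodesic edge-path in the \pcomplex, then the word $w$ labelling $\Phi\circ\gamma$ is geodesic in $\A_{\PD}$. By Theorem~\ref{thm:GCT} this holds precisely when $w$ is freely reduced and contains no chain word, so I would establish these two properties in turn.

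The one structural feature I would lean on throughout is that the \pcomplex\ is a \emph{full} square tiling of the plane: by Lemma~\ref{lem:lc1} the fundamental blocks fit together without defect, so every edge of the \pcomplex\ bounds exactly two relator squares and every vertex is surrounded by exactly four. Combined with the \canc4 condition --- which says that a pair determines its relator square uniquely up to symmetry --- this lets me recognise, for any corner (pair) occurring along $\gamma$, that the relator square of $\A_{\PD}$ abutting that corner is already one of the four squares of the \pcomplex\ meeting there, rather than some square seen only in $K$.

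For free reduction, a geodesic edge-path in the \pcomplex\ cannot traverse an edge and then immediately traverse it back, so I may assume $\gamma$ never backtracks; it then suffices to see that the edge labels are locally injective at each vertex. Two consecutive edges meet at a vertex $v$ either at a corner or collinearly. Invoking the black/white (horizontal/vertical) colouring of the \gridp, adjacent edges at $v$ carry labels of opposite colour, hence lie in disjoint generator sets and cannot form a cancelling pair $aa^{-1}$; for the collinear case a local check using that $\PD$ is prime and reduced shows the two incident edges carry distinct generators. Thus $w$ is freely reduced, and this part I expect to be routine.

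The absence of a chain word is the heart of the argument and the step I expect to be the main obstacle. Suppose $w$ contained a chain word $x_1\cdots x_n$, so that some subpath of $\gamma$ runs along one side of a chain of relator squares. Each edge $x_i$ of this subpath is an edge of the \pcomplex\ and each corner between consecutive edges is a pair, so by fullness together with \canc4 the chain square abutting that edge must coincide with one of the four squares of the \pcomplex\ present there; running along the chain I would conclude inductively that \emph{every} square of the chain lies in the \pcomplex. Consequently the strictly shorter ``other side'' of the chain furnished by Theorem~\ref{thm:GCT} is itself an edge-path of the \pcomplex\ with the same endpoints, contradicting that $\gamma$ is geodesic there. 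The delicate point, where I would spend the most care, is precisely this inductive claim that the chain never leaves the \pcomplex: one must verify that the local configuration forced by \canc4 and \cant4 at each vertex of the chain matches the four squares actually present in the tiling, and that the replacement path, reassembled from the inner edges of these squares, is genuinely an edge-path of the \pcomplex\ and not merely a path in $K$. Once this is checked the contradiction is immediate, $w$ contains no chain word, and by Theorem~\ref{thm:GCT} the lemma follows.
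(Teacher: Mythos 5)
Your reduction to Theorem~\ref{thm:GCT} and your free-reduction step are essentially fine (the collinear case you wave at is exactly what the paper proves via Lemma~\ref{lem:lc1} and the periodicity of the \pcomplex). The genuine gap is in the chain-word step, and it is twofold. First, your local picture of a chain word is wrong: in a chain word $x_1\cdots x_n$ only $x_1x_2$ and $x_{n-1}x_n$ are pairs, while every intermediate subword $x_ix_{i+1}$ is a sister-set, and by \cant4 a sister-set is never a pair. In the \pcomplex\ two consecutive path edges form a pair exactly when they turn a corner of a square, so a subpath carrying a chain word turns exactly twice --- at its two ends --- and runs straight in between; it is not true that ``each corner between consecutive edges is a pair'', and your induction cannot march from corner to corner as you describe. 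Second, and fatally, the contradiction you aim for is the wrong one. A geodesic edge-path in the \pcomplex\ is a monotone staircase, so the two turns of the putative chain subpath must turn opposite ways (say up--across--up), whereas the chain configuration of Theorem~\ref{thm:GCT} wraps over its squares (up--across--down). Matching chain squares to complex squares via \canc4 places $D_1$, and then inductively (through the shared vertical edges encoded by the sister-sets) all subsequent chain squares, \emph{below} the horizontal run; but the final pair $x_{n-1}x_n$ of the path sits on the square \emph{above} the run. So the chain does not embed abutting a monotone subpath at all, there is no ``other side of the chain'' inside the complex, and the shortcut contradiction with geodesicity of $\gamma$ never materializes. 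What a careful execution of the induction actually yields is a different contradiction: \canc4 forces the last chain letter $x_n$ to equal both the label of the upward edge and the label of the downward edge at the last turning vertex, and one must then prove that two distinct collinear edges at a vertex of the \pcomplex\ never carry the same label (again the Lemma~\ref{lem:lc1}/periodicity argument). That is precisely the point you defer as ``delicate'', so the proof is missing its crux.

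For contrast, the paper avoids chain realizability entirely by a reduction you do not make: since any two edge-paths in the \pcomplex\ with the same endpoints represent the same element of $\A_{\PD}$, and all geodesic edge-paths between two fixed vertices have the same taxicab length, it suffices to prove that the single L-shaped path ($q$ horizontal steps followed by $r$ vertical steps) labels a geodesic word. Such a path contains at most one pair subword --- its straight runs produce only sister-sets, which are not pairs by \cant4 --- whereas any chain word contains two pairs; hence no chain subword occurs and Theorem~\ref{thm:GCT} applies. This counting argument is unavailable for a general monotone staircase, where every turn is a pair, which is why the reduction to L-shaped paths is not cosmetic: it is the step that replaces the substantial missing argument your approach would otherwise require.
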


\begin{proof}
The proof uses the Geodesic Characterisation Theorem (Theorem~\ref{thm:GCT}).
Since any two paths in the \pcomplex\ with common beginning and 
ending
represent the same word in $\A_{\PD}$, it suffices to show that a path $p$ 
that goes
horizontally $q$ steps and then vertically $r$ steps  in the  
\pcomplex\ is geodesic in the
standard 2-complex.

Consider an edge-path $p$ in the \pcomplex\ that goes horizontally $q$
steps and then vertically $r$ steps. Such a path has at most one pair 
subword, since a sister-set is never a pair by the $T(4)$ condition. 
Thus we see that the label of the edge-path cannot contain a chain word, 
as this requires two pairs. (Recall the definitions of pairs and sister sets from Section~\ref{sub.squaregrid}.)

It remains to show that the label of the edge-path is freely reduced.  
To see why this is we begin by observing that since $\PD$ is reduced, four 
distinct regions of $\PD$ meet at every crossing and therefore every 
relator square has four distinct labels. Now suppose that $ab$ is a 
subword  of the word labelling the edge-path $p$. If the subword belongs 
to the horizontal path, then it labels the bottom of two relator squares  
$D_i$ and $D_{i+1}$ in the \pcomplex. By Lemma~\ref{lem:lc1}, the 
bottom label of  $D_{i+1}$ is also a label of the top of the relator square 
$D_i$.
This means that $b$ cannot label the bottom of $D_i$ and therefore  
$a\neq b$ and $ab$ is freely reduced. 

If the letter $a$ comes from a horizontal edge and $b$ from a vertical 
edge of $p$, then the subword $ab$ labels two sides of a relator square 
and is therefore freely reduced.

Finally, If the subword belongs to the vertical path, then it labels the 
right hand side  of two relator squares  $D_i$ and $D_{i+1}$ in the 
\pcomplex. By the periodicity of the \pcomplex, the 
right hand  label of  $D_{i}$ is also the label of the left hand side  of 
the relator square $D_{i+1}$.  This means that $b$ cannot label the right 
of $D_{i+1}$ and therefore  $a\neq b$ and $ab$ is freely reduced. 
\end{proof}

\begin{proof}[Proof of Lemma~\ref{lem:lc1}]
Since the exponents of the relators of the augmented Dehn 
presentation  alternate in sign, all of the orientations of the edges 
of the fundamental block are of the form required by the lemma. 

It remains to show that the edge labels are of the required form. 
First we show that 
the label on the top of the relator square $C_i$ is the same as the 
label  on the bottom of the relator square $C_{i+1}$ for $i=1, \ldots , n$.

Consider the relator square $C_1$ positioned as
%
\begin{center}
\labellist
\small\hair 2pt
\pinlabel $b$ at  54 104
\pinlabel $a$ at  102 53
\pinlabel $c$ at  54 5
\pinlabel $d$  at 5 53
\pinlabel $C_1$  at 53 53
\endlabellist
\includegraphics[height=20mm]{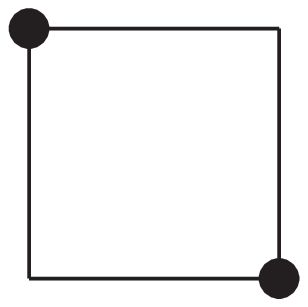}
\end{center}

By convention the labels $a$ and $b$ also appear in $C_2$ (since the 
regions $a$ and $b$ of $\PD$ are incident with the crossings $c_1$ and $c_2$). 
Therefore $C_2$ has one of the following forms 
\begin{center}
\labellist
\small\hair 2pt
\pinlabel $b$ at  54 104
\pinlabel $a$  at 5 53
\pinlabel $C_2$  at 53 53
\endlabellist
\includegraphics[height=20mm]{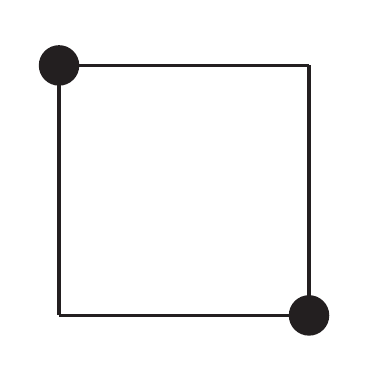}
\quad \raisebox{8mm}{or}\qquad
\labellist
\small\hair 2pt
\pinlabel $b$ at  54 5
\pinlabel $a$  at 5 53
\pinlabel $C_2$  at 53 53
\endlabellist
\includegraphics[height=20mm]{c}
\end{center}
These two relators have edge-paths $b^{-1}a$ and $ab^{-1}$ respectively.
By the small cancellation conditions, a  pair  uniquely determines a 
relator, so the word $ab^{-1}$ cannot appear in $C_2$  (as it appears 
in $C_1$ as $(ab^{-1})^{-1}$).  Therefore $b$ must be the label on the 
bottom of $C_2$.

We proceed inductively. Suppose that we have shown that the label on 
the top of $C_{k-1}$ coincides with the label on the bottom of $C_k$.
Since the crossing $c_k$ shares two incident regions with $c_{k-1}$ and 
two with $c_{k+1}$, the relator square $C_k$ share two labels with  
$C_{k-1}$ and two with $C_{k+1}$. By hypothesis, $C_k$ shares the labels on 
the bottom and left-hand  edges with $C_{k-1}$, so it  shares the labels 
on the top and right-hand edges with $C_{k+1}$.

Suppose  the word on the edge-path which follows the right-hand and then 
top edge of $C_k$ is $rs^{-1}$ or $r^{- 1}s$. We will deal with each case 
separately.

If the path is  $rs^{-1}$. 
Then $C_{k+1}$ also has edges labelled $r$ and $s$  and must contain the 
word $s r^{- 1}$ or $r^{-1}s$.  Since a pair determines a relator and 
$C_k \neq C_{k+1}$, we have that   $C_{k+1}$ must contain the word $r^{-1}s$ 
(since $s r^{- 1} = (rs^{-1})^{-1}$). The only way this can happen is if the 
letter $s$ is on the bottom of $C_{k+1}$.

Similarly,  if the path is  $r^{-1}s$. 
Then $C_{k+1}$ also has edges labelled $r$ and $s$  and must contain the 
word $s^{-1} r$ or $rs^{-1}$.  Since a pair determines a relator and 
$C_k \neq C_{k+1}$,  we have that $C_{k+1}$ must contain the word $rs^{-1}$. The only way 
this can happen is if the letter $s^{-1}$ is on the bottom of $C_{k+1}$.

We have shown that the label on the top of the relator square $C_i$ is 
the same as the label  on the bottom of the relator square $C_{i+1}$ for 
$i=1, \ldots , n$. 

To complete the proof, consider the relator square 
$C_{2n}$ in the fundamental block. $C_{2n}$ is of the form
%
\begin{center}
\labellist
\small\hair 2pt
\pinlabel $s$ at  54 104
\pinlabel $r$ at  102 53
\pinlabel $q$ at  54 5
\pinlabel $p$  at 5 53
\pinlabel $C_{2n}$  at 53 53
\endlabellist
\includegraphics[height=20mm]{c}
\end{center}
%
where the labels $p$ and $q$ are shared with $C_{2n-1}$ and $r$ and $s$ 
are shared with $C_1$ (since $c_1$ and $c_{2n}$ share incident regions of 
$\PD$).  
But again, the small cancellation conditions say that a pair uniquely 
determines a relator and $C_{2n}\neq C_1$, therefore we must have  $s=c$ 
and $r=d$, where $c$ and $d$ are the labels of $C_1$ as shown above.  
This completes the proof of the lemma.
\end{proof}

\begin{lemma}
\label{lem:lc2}
In the fundamental block of a  reduced, prime,  alternating, oriented knot 
diagram $\PD$,
\begin{enumerate}
\item  
the label of an edge-path from the bottom-right to top-left 
corner of $C_{2n}$ describes a curve homotopic to a meridional loop, 
or its inverse, of the knot through the base point of $\PD$;
\item 
the label of an  edge-path from the bottom left to top right 
corner of $C_{2i-1}$, $i=1, \ldots , n$ describes a loop which follows 
the under-crossing  of the knot at $c_{2i-1}$; 
\end{enumerate}
\end{lemma}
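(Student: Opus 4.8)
The plan is to exploit the orientation structure of the fundamental block — the sink/source pattern of the corners of each relator square — together with the defining placement of $C_1$ and the alternating nature of the diagram. As we walk around the knot we meet the crossings in the order $c_1,\dots,c_{2n}$, and the block is glued in exactly this order; at each crossing $c_k$ the strand of the knot that we are walking along passes through $c_k$, and this passage is recorded by one of the two corner-to-corner edge-paths of $C_k$. Call this the \emph{knot diagonal} of $C_k$. By the construction of the complex, the knot diagonal of $C_1$ is the path from $(0,0)$ to $(1,1)$ and its label is precisely the loop following the undercrossing at $c_1$; this is the base case for everything below.

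First I would track the sink/source type of the four corners of each $C_k$. In any relator square the two sources are diagonally opposite and the two sinks are diagonally opposite, so each square has a well-defined source-diagonal and sink-diagonal, and for $C_1$ the knot diagonal is one of these. The gluing identifies the right edge of $C_k$ with the left edge of $C_{k+1}$ preserving orientation, hence identifies the top-right and bottom-right corners of $C_k$ with the top-left and bottom-left corners of $C_{k+1}$ with matching type. Since this identification interchanges the roles of the two diagonals, a one-line induction (using Lemma~\ref{lem:lc1} to control how labels and orientations propagate) shows that the knot diagonal alternates along the block: it is the bottom-left–top-right diagonal for odd-indexed squares and the bottom-right–top-left diagonal for even-indexed squares. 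Because the crossings of an alternating diagram alternate over/under as we traverse the knot, and $c_1$ is an undercrossing, the strand we walk along at $c_{2i-1}$ is the understrand. Combining these two alternations gives part (2): for odd $k=2i-1$ the knot diagonal is the bottom-left–top-right diagonal and its label is the loop following the undercrossing at $c_{2i-1}$.

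For part (1), the square $C_{2n}$ has even index, so by the alternation just established its knot diagonal is the bottom-right–top-left diagonal, and its label is the meridian of the strand we walk along through $c_{2n}$ (the overstrand). The remaining point is that this meridian is the meridian through the basepoint up to inverse, and not merely some conjugate: the basepoint lies near the strand leaving $c_{2n}$ and returning to $c_1$, and $c_{2n}$ is the last crossing met before the walk closes up. Here I would invoke the closing-up identification of the leftmost and rightmost vertical edges of the block from Lemma~\ref{lem:lc1}(1), which shows that the connecting path back to the basepoint is trivial, so the bottom-right–top-left edge-path reads off the bare meridian $m^{\pm 1}$.

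The step I expect to be the main obstacle is the uniform geometric identification used implicitly in the second paragraph: translating the purely combinatorial gluing of relator squares into the statement that the knot diagonal of \emph{every} $C_k$ literally records the walk-strand through $c_k$, so that its label is the corresponding meridian. This requires checking that the two strands through a crossing correspond to the two diagonals of its relator square, and that the gluing of consecutive squares matches the way consecutive crossings are joined along the knot; the basepoint bookkeeping needed to pin down the bare meridian in part (1) is a delicate special case of the same correspondence.
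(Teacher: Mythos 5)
Your conclusions agree with the lemma, but the mechanism your proof rests on --- that the ``knot diagonal'' alternates between the two diagonals of consecutive squares because the edge-gluing ``interchanges the roles of the two diagonals'' --- is both unjustified and, as a description of the combinatorics, incorrect. First, your knot diagonal is not uniformly defined: at an even-indexed square the walked strand passes \emph{over} the crossing, so it crosses no regions of $\PD$ and its passage is recorded by no edge-path at all; what you actually use at $C_{2n}$ is a different loop (a meridian encircling the walked strand), silently substituted for the undefined notion. Second, if one repairs the definition in the natural way --- the diagonal joining the corner where the two regions flanking the incoming arc meet to the corner where the two regions flanking the outgoing arc meet --- then this diagonal does \emph{not} alternate. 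The proof of Lemma~\ref{lem:lc1} shows that the labels shared with $C_{k-1}$ sit on the bottom and left edges of $C_k$ while those shared with $C_{k+1}$ sit on its top and right edges; hence the incoming corner is bottom-left and the outgoing corner is top-right for \emph{every} $k$, and the repaired knot diagonal is the bottom-left--top-right one in every square of the block. The apparent alternation between parts (1) and (2) of the lemma comes from the over/under alternation changing the \emph{geometric meaning} of the two diagonals (under-passage loop at odd squares versus meridian of the walked strand at even squares), not from the gluing flipping anything; so your ``one-line induction'' has no valid inductive step.

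The remaining content --- that the label of the stated diagonal literally reads the stated word, and in part (1) reads the meridian \emph{through the basepoint} rather than a conjugate of it --- is exactly what you defer as ``the main obstacle,'' and it is the whole proof. The paper establishes it by a finite case analysis: for part (2) it lists the four placements of the relator $ab^{-1}cd^{-1}$ compatible with the sink/source orientation of $C_{2i-1}$, uses the block construction (the left edge of $C_{2i-1}$ must carry a label shared with $C_{2i-2}$) to eliminate two of them, and checks the surviving two directly; for part (1) it uses that $C_{2n}$ shares with $C_1$ precisely the two regions flanking the arc of $\PD$ containing the basepoint, together with the $C^{\prime\prime}(4)$ property that a pair uniquely determines a relator, to pin down the two possible placements of $C_{2n}$ in the block and read off the meridian or its inverse. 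Your appeal to Lemma~\ref{lem:lc1}(1) (``the connecting path back to the basepoint is trivial'') is not the right mechanism for this last point either: by Remark~\ref{rem.phi} a word read off a diagonal is already a based loop, and what makes it the basepoint meridian is that the two regions it passes through are the ones adjacent to the basepoint arc --- which is precisely the shared-regions-with-$C_1$ argument above.
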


\begin{proof}
The relator square $C_{2n}$ comes from a crossing of the form
%
\begin{center}
\labellist
\small\hair 2pt
\pinlabel $a$ at  35 71
\pinlabel $b$ at  72 71
\pinlabel $c$ at  72 35
\pinlabel $d$  at 35 35
\endlabellist
\raisebox{-3mm}{\includegraphics[height=25mm]{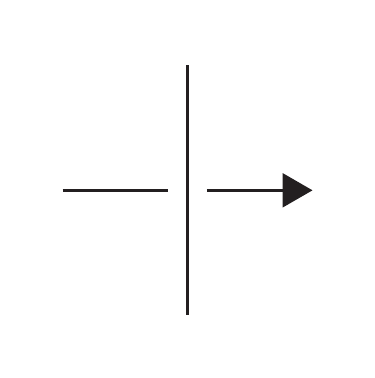}}
\end{center}
%
and, since $b$ and $c$ are also labels of $C_1$, it must appear in the 
fundamental block in one of the following forms
%
\begin{center}
\labellist
\small\hair 2pt
\pinlabel $b$ at  54 104
\pinlabel $c$ at  102 53
\pinlabel $d$ at  54 5
\pinlabel $a$  at 5 53
\endlabellist
\includegraphics[height=20mm]{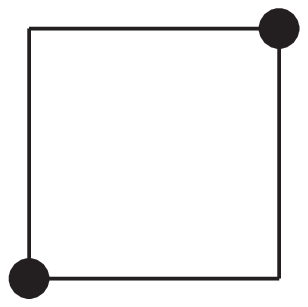}
\quad \raisebox{8mm}{or}\qquad
\labellist
\small\hair 2pt
\pinlabel $c$ at  54 104
\pinlabel $b$ at  102 53
\pinlabel $a$ at  54 5
\pinlabel $d$  at 5 53
\endlabellist
\includegraphics[height=20mm]{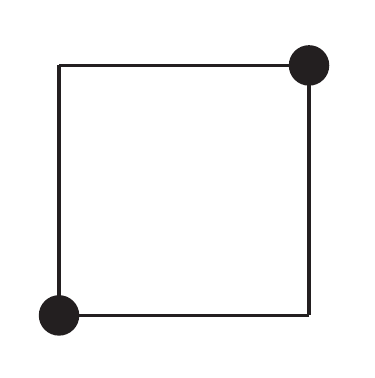}
\end{center}
In either case  we see that the edge-path bottom-right  to top-left corner 
describes a meridian or its inverse.  This proves the first statement of 
the lemma.

We now prove the second statement. The  relator square $C_{2i-1}$ appears 
in the fundamental block with orientation
%
\begin{center}
\includegraphics[height=20mm]{c}
\end{center}
and  at $c_{2i-1}$ we travel along an undercrossing of the form
%
\begin{center}
\labellist
\small\hair 2pt
\pinlabel $a$ at  35 71
\pinlabel $b$ at  72 71
\pinlabel $c$ at  72 35
\pinlabel $d$  at 35 35
\endlabellist
\raisebox{-3mm}{\includegraphics[height=25mm]{a}}
\end{center}

which contributes the relator $ab^{-1}cd^{-1}$.
Therefore $C_{2i-1}$ is of one of the four forms
\begin{center}
\labellist
\small\hair 2pt
\pinlabel $c$ at  54 104
\pinlabel $b$ at  102 53
\pinlabel $a$ at  54 5
\pinlabel $d$  at 5 53
\endlabellist
\includegraphics[height=20mm]{c}
\quad \labellist
\small\hair 2pt
\pinlabel $a$ at  54 104
\pinlabel $d$ at  102 53
\pinlabel $c$ at  54 5
\pinlabel $b$  at 5 53
\endlabellist
\includegraphics[height=20mm]{c}
\quad
\labellist
\small\hair 2pt
\pinlabel $d$ at  54 104
\pinlabel $a$ at  102 53
\pinlabel $b$ at  54 5
\pinlabel $c$  at 5 53
\endlabellist
\includegraphics[height=20mm]{c}
\quad 
\labellist
\small\hair 2pt
\pinlabel $b$ at  54 104
\pinlabel $c$ at  102 53
\pinlabel $d$ at  54 5
\pinlabel $a$  at 5 53
\endlabellist
\includegraphics[height=20mm]{c}
\end{center}
(these are all possible ways that the relator can fit the orientation 
of $C_{2i-1}$). But, by the construction of the fundamental block, $a$ or $d$ 
must label the vertical left edge of $C_{2i-1}$. This eliminates two of the 
four possible labellings  of $C_{2i-1}$ above, and it is easily seen that in 
the remaining two possibilities, the label of an  edge-path from the 
bottom left to top right corner  describes a loop which follows the 
under-crossing  of the knot at $c_{2i-1}$, as required.
\end{proof}

Let $n$ denote the number of crossings of $\PD$.
Further, let $\lambda$ denote the double of the diagram   $\PD$ determined by  the blackboard framing, and based at 
a point $x_0$. Let 
$\mu$ be the meridian of $\PD$  based at $x_0$.  The orientations of $\mu$ 
and $\lambda$ are determined by the orientation of $\PD$.
A peripheral element of the knot group $\pi_1(S^3-K)$ is then a product 
$\lambda^a \mu^b$, $a,b \in \mathbb{Z}$.  
The curves $\lambda$ and $\mu$ in $\PD$ also determine canonical elements 
$\phi^{-1}(\iota(\lambda))$ and $\phi^{-1}(\iota(\mu))$ of the augmented knot 
group $G=\pi_1(S^3-(K \cup \O))$.   We abuse notation and also denote these 
elements by $\lambda$ and $\mu$ respectively.
We say that an element of the (augmented) knot group is {\em peripheral} 
if it represents the  element 
$\lambda^a \mu^b$ for some  $a,b \in \mathbb{Z}$. 

\begin{lemma}
\label{lem:lg1}
Let $w$ be a word which labels an edge-path from the point $(0,0)$ to the 
point $(an-b , an+b  )$ in the \pcomplex\ of $\PD$, where 
$a,b\in \mathbb{Z}$ and $n$ is the number of crossings of $\PD$. Then 
$w$ is peripheral and represents the element $\lambda^a\mu^b$. Conversely, 
every peripheral element $\lambda^a\mu^b$ has a representative as the label 
of an edge-path from the point $(0,0)$ to the point $(an-b , an+b  )$ in 
the \pcomplex. 
\end{lemma}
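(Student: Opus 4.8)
The plan is to reduce both assertions to the single fact that the displacement vectors $(-1,1)$ and $(n,n)$ represent the meridian $\mu$ and the longitude $\lambda$, respectively. First I would record that, because the \pcomplex{} tiles the plane and each of its $2$-cells is a relator of $\A_{\PD}$, any two edge-paths with the same endpoints carry the same word in $\A_{\PD}$; this is exactly the observation used in the proof of Lemma~\ref{lem.geodesic}. Consequently there is a well-defined map $g$ from the vertices of the \pcomplex{} to $\A_{\PD}$, sending a vertex $v$ to the word labelling any edge-path from $(0,0)$ to $v$, normalised by $g(0,0)=1$.

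Next I would exploit the translational symmetry of the \pcomplex. By Lemma~\ref{lem:lc1}(1) the complex is invariant under the horizontal translation $v\mapsto v+(2n,0)$, and the log-cabin stacking of Lemma~\ref{lem:lc1}(2)---where the top edge of a block is its bottom edge shifted cyclically by one---shows it is invariant under $v\mapsto v+(-1,1)$. Hence the \pcomplex{} is invariant, as a labelled complex, under the lattice $\Lambda=\langle (2n,0),(-1,1)\rangle=\langle (n,n),(-1,1)\rangle$, the last equality because $(n,n)=(2n,0)+n\,(-1,1)$. For $v,w\in\Lambda$, translating a path from $(0,0)$ to $w$ by the label-preserving symmetry $v$ turns it into a path from $v$ to $v+w$ carrying the same word, so concatenation gives $g(v+w)=g(v)\,g(w)$; that is, $g|_\Lambda\colon\Lambda\to\A_{\PD}$ is a homomorphism. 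Since $\Lambda\cong\Z^2$ is free abelian on $(n,n)$ and $(-1,1)$, its image is abelian and
\[
g\bigl(a\,(n,n)+b\,(-1,1)\bigr)=g(n,n)^{a}\,g(-1,1)^{b},\qquad a,b\in\Z,
\]
where $a\,(n,n)+b\,(-1,1)=(an-b,\,an+b)$ is exactly the target vertex in the statement.

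It then remains to identify the two generators. The meridian is immediate: the displacement $(-1,1)$ is realised by the bottom-right-to-top-left diagonal of $C_{2n}$, which by Lemma~\ref{lem:lc2}(1) labels a meridian; fixing orientation conventions as permitted by Remark~\ref{rem:lg}, we obtain $g(-1,1)=\mu$. The main obstacle is to prove $g(n,n)=\lambda$. I would do this by realising the blackboard double $\lambda$ of $\PD$ as an explicit edge-path in the \pcomplex: one follows the strand through the $2n$ crossings $c_1,\dots,c_{2n}$, using Lemma~\ref{lem:lc2}(2) to read off the contribution of each of the $n$ undercrossing squares $C_1,C_3,\dots,C_{2n-1}$ and a parallel analysis for the intervening overcrossing squares. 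Tracking the resulting staircase through the log-cabin stacking, one checks that it rises exactly $n$ rows while advancing $n$ columns, hence terminates at $(n,n)$, and that reading the traversed regions via Remark~\ref{rem.phi} returns precisely $\lambda$. Verifying that the blackboard framing produces no spurious power of $\mu$---equivalently, pinning down the endpoint exactly rather than merely up to a meridian---is the delicate heart of the argument.

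With $g(-1,1)=\mu$ and $g(n,n)=\lambda$ in hand, both directions follow at once from the displayed formula. The forward direction is its value at $(an-b,\,an+b)$, namely $g(an-b,\,an+b)=\lambda^{a}\mu^{b}$, so the word is peripheral as claimed. For the converse, the same formula exhibits any prescribed peripheral element $\lambda^{a}\mu^{b}$ as the word labelling the edge-path from $(0,0)$ to $(an-b,\,an+b)$, which is the required representative.
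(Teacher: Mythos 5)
Your proposal is correct and follows essentially the same route as the paper: both rest on path-independence of labels in the \pcomplex, the identification of the $(-1,1)$ diagonal of $C_{2n}$ with $\mu$ via Lemma~\ref{lem:lc2}(1), the realisation of the blackboard double $\lambda$ as the staircase through $C_1,C_3,\dots,C_{2n-1}$ ending at $(n,n)$ via Lemma~\ref{lem:lc2}(2), and periodicity of the complex to pass to $\lambda^a\mu^b$. Your lattice-homomorphism packaging of the periodicity is a cosmetic reorganisation rather than a different argument; the only slight slip is the suggestion that the overcrossing squares need ``a parallel analysis''---in the Dehn presentation the double contributes letters only where it passes under an arc, so the even-indexed squares simply do not appear in the staircase.
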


\begin{proof}
We show that there exists one word $l^am^b$ labelling the edge-path from 
$(0,0)$ to $(an-b , an+b  )$ in the \pcomplex\ which represents 
$\lambda^a \mu^b$, for each choice of $a$ and $b$. Since the \pcomplex\ 
complex embeds in the standard  2-complex of the augmented Dehn 
presentation, it follows that any  word which labels a edge-path from the 
point $(0,0)$ to the point $(an-b , an+b  )$ represents the peripheral 
element $l^am^b$.

Label the crossings of $\PD$ by  $c_1, \ldots , c_{2n}$ according to the 
conventions in Subsection~\ref{sub.log-cabin}.   We can find a 
representative $l$ of $\lambda$ in the augmented Dehn presentation 
$\A_{\PD}$ as follows: 
take a framed double $\lambda$ of $\PD$.
Begin by taking $l$ to be the empty word. Walk once around $\lambda$ 
and concatenate a subword  $X_aX_b^{-1}$  to the right of  $l$ whenever we 
pass under an arc of $\PD$ from a region labelled $a$ to a region labelled 
$b$. 
The word $l$ obtained clearly represents $\lambda$.

Since the knot is alternating and, by our convention on the labelling of 
the crossing, the double $\lambda$ of $\PD$ passes under an arc of $\PD$ 
at the crossings $c_{2i-1}$, for $i=1, \ldots, n$. By Lemma~\ref{lem:lc2}, the 
two letter subword contributed to $l$ at the crossing $c_{2i-1}$ is exactly 
the label of an edge-path from the bottom left to top right corner of a 
relator square $C_{2i-1}$ in the \pcomplex. 
Therefore $l$ can be described as an edge-path from the bottom left to the 
top right of the following complex:
%
\begin{center}
\labellist
\small\hair 2pt
\pinlabel $C_1$ at  54 54
\pinlabel $C_3$ at  125 125
\pinlabel $C_{2n-1}$ at  270 270
\endlabellist
\includegraphics[height=50mm]{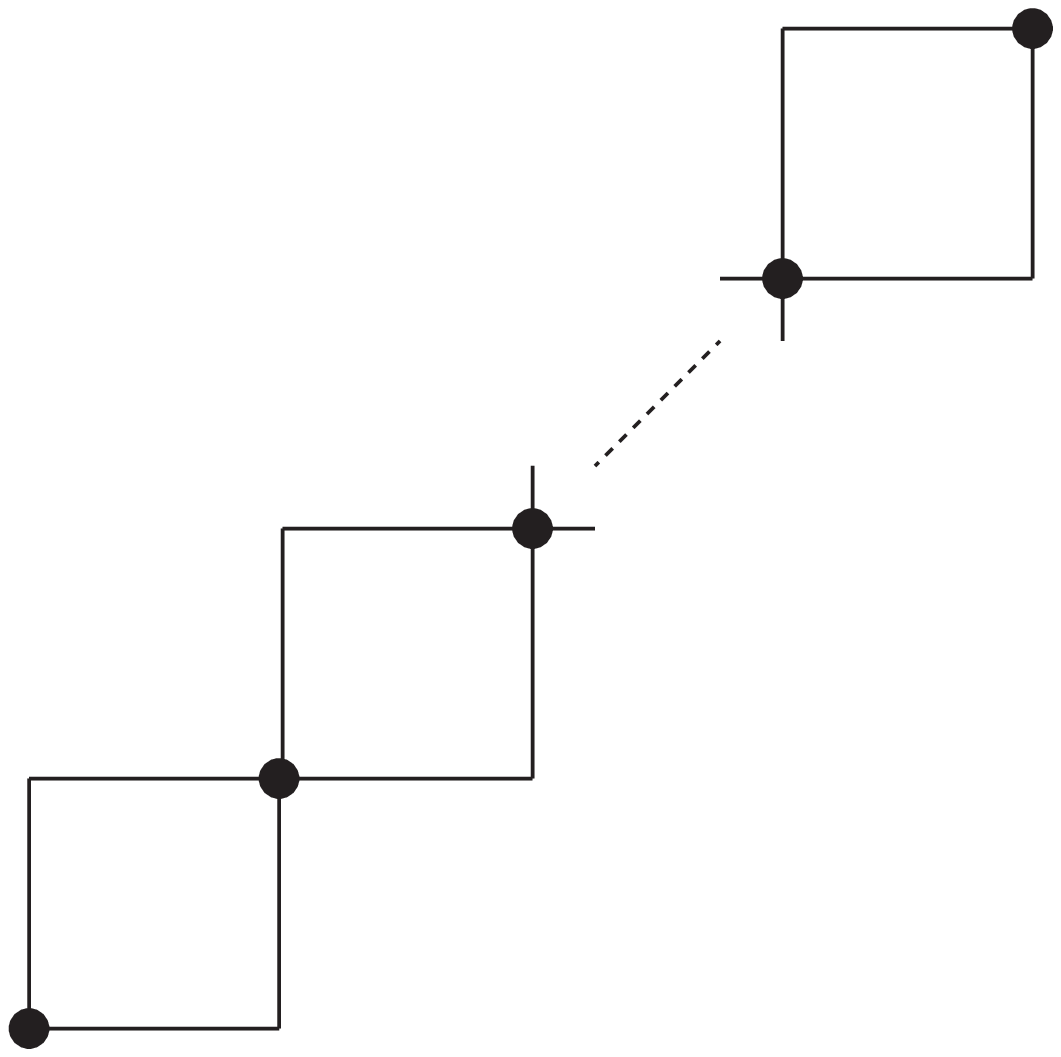}
\end{center}
Clearly, such a complex is embedded  in the \pcomplex\ and the edge-path is 
a path from $(0,0)$ to  $(n,n)$. By the periodicity of the 
\pcomplex, it follows that the word $l^a$ is an edge-path in the 
\pcomplex\ from $(0,0)$ to $(an,an)$ for all $a \in \mathbb{Z}$.

We have shown that powers of the longitude are contained in the 
\pcomplex. We now show that powers of meridians and peripheral 
elements are also contained in the \pcomplex.

By Lemma~\ref{lem:lc2} the label $m$ of an edge-path from $(0,0)$ to 
$( -1, 1)$ represents the  meridian  $\mu$ or its inverse $\mu^{-1}$. 
By the periodicity of the \pcomplex\ the label $m^b$ of an edge 
path from $(0,0)$ to $( -b, b)$, $b\in \mathbb{Z}$, represents a power 
of the meridian and, again by  the periodicity of the \pcomplex\ 
$(an,an)$ to $( an-b, an+b)$, $b\in \mathbb{Z}$ represents a power of the 
meridian for each $a,b\in \mathbb{Z}$. 

Therefore the label of an edge-path from the point $(0,0)$ to the point 
$(an-b , an+b  )$ in the \pcomplex\  is $l^am^b$ and  is peripheral. 
\end{proof}

\subsection{The peripheral word problem}
\label{sub.peripheralword}

The aim of this section is to solve the peripheral word problem using
the \pcomplex.

\begin{theorem}
\label{thm:lg2}
Let $w$ be a geodesic word in the augmented Dehn presentation of the $n$ 
crossing diagram $\PD$. Then $w$ is peripheral and represents the element 
$\lambda^a\mu^b$ if and only if it labels a geodesic edge-path from 
$(0,0)$ to $(an-b , an+b  )$ in the \pcomplex\ of $\PD$ for some  
$a,b\in \mathbb{Z}$.
\end{theorem}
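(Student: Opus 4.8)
The plan is to prove the two implications separately, observing that the backward direction is immediate from the lemmas already established, while the forward direction carries all of the content.

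For the backward direction, suppose $w$ labels a geodesic edge-path from $(0,0)$ to $(an-b,an+b)$ in the \pcomplex. Then the forward half of Lemma~\ref{lem:lg1} applies verbatim: any word reading an edge-path between these two vertices is peripheral and represents $\lambda^a\mu^b$. (The hypothesis that $w$ is geodesic is not even needed here; in any case Lemma~\ref{lem.geodesic} guarantees that a geodesic edge-path in the \pcomplex\ labels a geodesic word in $\A_{\PD}$.) So this direction requires no new work.

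For the forward direction, assume $w$ is geodesic and $w =_{\A_{\PD}} \lambda^a\mu^b$. First I would fix a comparison word: by the converse half of Lemma~\ref{lem:lg1} there is a word $w_0$ reading a monotone (L-shaped) edge-path $p_0$ from $(0,0)$ to $(an-b,an+b)$ in the \pcomplex, and by the argument of Lemma~\ref{lem.geodesic} (which shows such L-shaped paths are geodesic) $w_0$ is geodesic, so $|w|=|w_0|$. The goal is then to realise $w$ itself as an edge-path in the \pcomplex\ with the same endpoints; once this is done the geodesic hypothesis on $w$ upgrades it to a geodesic edge-path and the proof is complete. To produce such a path I would form a reduced Dehn diagram $D$ for the word $w\,w_0^{-1}$, which equals $1$ in $\A_{\PD}$, so that the two boundary arcs of $D$ are labelled by the geodesics $w$ and $w_0$. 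Because both boundary labels are freely reduced and contain no chain word in the sense of Theorem~\ref{thm:GCT}, the $C''(4)$--$T(4)$ structure should force $D$ into a grid (staircase) shape: every $2$-cell is a relator square, no interior vertex has degree less than four, and neither boundary arc admits a shortcut.

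I would then transplant $D$ into the \pcomplex: place the arc $p_0$ of $\partial D$ along the known path $p_0\subset\pcomplex$ and extend one square at a time. At each stage the next $2$-cell of $D$ shares a pair (two consecutive boundary letters) with the part already placed, and by the $C''(4)$ condition a pair determines its relator square up to symmetry; since the \pcomplex\ tiles the whole plane and carries exactly these relator squares in the forced pattern — which is precisely the content of Lemmas~\ref{lem:lc1} and~\ref{lem:lc2} — the forced square coincides with the adjacent square of the \pcomplex. Inductively the whole of $D$ maps onto a region of the \pcomplex, so the boundary arc labelled $w$ reads an edge-path from $(0,0)$ to the terminal vertex of $p_0$, namely $(an-b,an+b)$, as required.

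The main obstacle is this middle step: controlling the combinatorics of $D$ and proving that it embeds edge-to-edge into the \pcomplex. Two points need care. First, that a reduced diagram bounded by two geodesics genuinely has the non-positively-curved grid shape with no folding — this is where a combinatorial Gauss--Bonnet count against the $C''(4)$--$T(4)$ conditions, combined with the no-chain-word characterisation of geodesics, is needed. Second, that the square-by-square transplantation into the \pcomplex\ is globally consistent, with distinct parts of $D$ never forced onto overlapping squares in incompatible ways; this rests on the uniqueness in the $C''(4)$ condition exactly as in the proofs of Lemmas~\ref{lem:lc1} and~\ref{lem:lc2}, but now applied across a two-dimensional region rather than a single strip, and reconciling that local forcing with the global periodicity of the \pcomplex\ is the delicate part. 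Alternatively, one could run the whole argument through the CAT(0) square-complex structure of the universal cover (a VH-complex in the sense of Wise) together with convexity of the lift of the \pcomplex, but I expect the combinatorial route above to stay closer to the tools already developed here.
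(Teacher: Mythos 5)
Your backward direction is correct and matches the paper: Lemma~\ref{lem:lg1} gives it immediately, and Lemma~\ref{lem.geodesic} supplies geodesicity of the comparison word. The problem is the forward direction, which you rightly say carries all the content: as written it is a plan, not a proof. The two steps you flag as ``needing care''---that a reduced Dehn diagram bounded by the two geodesics $w$ and $w_0$ must have the flat grid/staircase shape, and that this diagram can be transplanted square-by-square into the \pcomplex\ consistently with its global periodicity---are the entire substance of the argument, and you leave both as acknowledged obstacles (``should force'', ``is needed'', ``is the delicate part''). Neither is routine: together they amount to re-deriving the geodesic completion theorem for \squarep s, which is a result of the literature (\cite{Jg2,Kr}), not something that falls out of Theorem~\ref{thm:GCT} and the pair-determines-relator property in a few lines. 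In particular, your transplantation induction has to exclude folding, cut vertices, and non-convexity in $D$, and nothing in the sketch does so.

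The pointed criticism is that the tool that closes exactly this gap, Theorem~\ref{thm:gcomp}, is stated in the paper immediately before this proof, and you never invoke it. The paper's proof is short precisely because of it: the rectangle $R_{ab}$ spanned by $(0,0)$ and $(an-b,an+b)$ in the \pcomplex\ is a Dehn diagram (by Lemma~\ref{lem.geodesic}) in which your $w_0$ (the paper's $w_{ab}$) labels a corner-to-corner geodesic edge-path; by the uniqueness in Theorem~\ref{thm:gcomp} this rectangle is the geodesic completion of $w_{ab}$, and part (3) of that theorem then says that a geodesic word equals $w_{ab}$ in $\A_{\PD}$ if and only if it labels a geodesic edge-path between those two corners. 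Combined with Lemma~\ref{lem:lg1} (every peripheral element $\lambda^a\mu^b$ is represented by some $w_{ab}$), that is the whole proof. So your outline becomes correct---and essentially identical to the paper's---once the van Kampen diagram construction is replaced by a single application of Theorem~\ref{thm:gcomp}; as it stands, what you have written is a sketch of a proof of that cited theorem rather than a completed proof of Theorem~\ref{thm:lg2}.
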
 

To prove the theorem we need the following result from \cite{Jg2} and 
\cite{Kr}.
\begin{theorem}
\label{thm:gcomp}
Let $w$ be a  geodesic word in a \squarep\ of 
a group $G$ all of whose relators are of length four. Then $w$  uniquely 
determines a tiling of relator squares  bounded by (but not necessarily 
filling) a rectangle in the Euclidean  plane such that:
\begin{enumerate}
\item the tiling embeds in the standard 2-complex of the group, i.e. it 
is a Dehn diagram;
\item the word labels a geodesic edge-path from one corner of the 
rectangle to the opposite corner; and 
\item if $w'$ is a geodesic word then $w'=_G w$ if and only if $w'$  
labels a geodesic edge-path from one corner of the rectangle to the 
opposite corner  path homotopic to $w$.  
\end{enumerate}  
\end{theorem}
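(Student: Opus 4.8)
The plan is to build the tiling from $w$ by means of the \emph{dual curves} (or \emph{bands}) of Dehn diagrams over the presentation, in the manner of \cite{Jg2, Kr}, and then to read off the three conclusions from the planar geometry of the resulting curve arrangement. Recall from Section~\ref{sub.wordp} that a word equal to $1$ in $G$ bounds a Dehn diagram, and that by the Geodesic Characterisation Theorem (Theorem~\ref{thm:GCT}) a geodesic word is freely reduced and contains no chain word. The basic local object is a dual curve: in a diagram tiled by relator squares, a dual curve crosses a square transversally, entering through one edge and exiting through the opposite edge, so each square is crossed by exactly two dual curves. First I would record the two structural facts that power everything, both valid in a \emph{reduced} diagram: (i) each dual curve is an embedded arc meeting the boundary in exactly two points, and (ii) two distinct dual curves cross at most once. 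A self-crossing, or a bigon between two curves, would produce a subword that is either not freely reduced or is a chain word in the sense of Theorem~\ref{thm:GCT}, hence shortenable, contradicting geodesy; this is where the \canc4 condition enters. The \cant4 condition enters separately: it forbids three dual curves crossing pairwise in a triangle, equivalently an interior vertex of valence three, which is exactly what keeps the arrangement planar and nonpositively curved.

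Next I would analyze the reduced Dehn diagram $D$ cobounded by $w$ and any competing geodesic $w'$ with $w'=_G w$. Because $w$ is geodesic, fact (i) forces every dual curve to meet the $w$-arc at most once, and similarly for the $w'$-arc; since each dual curve has two boundary endpoints on $\partial D = w\cup (w')^{-1}$, each runs from a single letter of $w$ to a single letter of $w'$. This yields a length-preserving bijection between the letters of $w$ and of $w'$ (so $|w'|=|w|$) and exhibits $D$ as a tiling by squares indexed by the crossings of dual curves. I would then \emph{coordinatize} $D$: using the planar arrangement one assigns to each vertex a pair $(x,y)\in\BZ^2$ counting how many dual curves of each of the two ``sides'' of the arrangement separate it from the starting corner, and property (ii) makes this assignment consistent. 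The upshot is that $D$ embeds as a staircase region inside an honest rectangle $[0,p]\times[0,q]$ of the integer grid, with $w$ and $w'$ realized as two monotone edge-paths from $(0,0)$ to $(p,q)$. Equivalently, $D$ carries nonpositive combinatorial curvature in its interior (by \cant4) and along its two geodesic boundary arcs, so by combinatorial Gauss--Bonnet all of the $2\pi$ of curvature of the disk sits at the two opposite corners where $w$ and $w'$ meet; this flatness is what permits the isometric placement in the grid, and it is the technical heart of the argument.

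With the coordinatization in hand the three assertions follow. The tiling canonically attached to $w$ is the union over all geodesics $w'=_G w$ of the diagrams $D$ above, equivalently the \emph{interval} between the two corners in the Cayley complex; that this union is again a finite staircase region bounded by a rectangle, and that it depends only on the group element (hence only on $w$), is what justifies the word ``uniquely.'' Conclusion (1) holds by construction, since the cells of the tiling are relator squares glued along equally labelled oriented edges, so the tautological cellular map to the standard $2$-complex is a Dehn diagram. Conclusion (2) holds because $w$ is, by construction, a monotone edge-path from $(0,0)$ to $(p,q)$, i.e.\ from one corner of the rectangle to the opposite one. For conclusion (3), a monotone corner-to-corner path that is path-homotopic to $w$ inside the planar tiling represents the same element as $w$ in $\pi_1$ of the diagram, hence in $G$; conversely, given a geodesic $w'=_G w$, the diagram $D$ constructed above embeds in the canonical tiling and displays $w'$ as exactly such a path.

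The step I expect to be the genuine obstacle is the coordinatization and Gauss--Bonnet flatness of the second paragraph: converting the abstract dual-curve arrangement of $D$ into an embedded staircase region of a rectangular grid, and showing the resulting tiling is independent of the auxiliary geodesic $w'$. The combinatorial engine underlying this is that any two geodesics for the same element are joined by single-square \emph{flips}, replacing a two-letter subword reading two sides of a relator square $ab^{-1}cd^{-1}$ by the complementary pair (so $ab^{-1}\mapsto dc^{-1}$); one peels off an innermost corner square of $D$ and inducts on the number of squares, exactly as in the shortening procedure following Theorem~\ref{thm:GCT}. Verifying that these flips sweep out \emph{all} equivalent geodesics, and only the monotone ones, is precisely what forces the set of geodesics to coincide with the monotone diagonal paths of the tiling.
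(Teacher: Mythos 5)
You should note at the outset that the paper does not prove this theorem at all: it is imported verbatim from Johnsgard \cite{Jg2} and \cite{Kr} (``To prove the theorem we need the following result from\dots''), so there is no internal proof to compare against. Your dual-curve reconstruction is essentially the argument of those sources: dual curves in a reduced diagram between two geodesics are embedded, pairwise cross at most once, and run from the $w$-arc to the $w'$-arc (else a non-reduced subword or a chain word in the sense of Theorem~\ref{thm:GCT} would contradict geodesy); $T(4)$ rules out interior vertices of degree three; flatness then lets the diagram sit as a staircase region in a rectangular grid, and the completion is the union of all such diagrams, with flips connecting equivalent geodesics. So in approach you are aligned with the cited proof rather than offering a genuinely different route.

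Two points in your sketch are weaker than they should be. First, your Gauss--Bonnet accounting is wrong as stated: the total boundary turning of $2\pi$ is \emph{not} concentrated at the two corners where $w$ and $w'$ meet (each such corner typically carries only $\pi/2$); the remaining curvature sits at the convex staircase corners along the two monotone boundary arcs, with concave corners contributing $-\pi/2$. The flatness conclusion survives, but it comes from interior vertices having degree at least four (by $T(4)$) together with the no-chain condition along the boundary, not from corner concentration; relatedly, your coordinatization presupposes a split of the dual curves into two ``parallel'' families, which in a mere $C''(4)$--$T(4)$ presentation (no VH bicoloring is assumed here) has to be extracted from the flat structure rather than assumed, and the ``bounded by but not necessarily filling'' clause exists precisely because the completion can be a pinched staircase rather than a full rectangle. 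Second, the step you yourself flag --- that single-square flips reach \emph{all} geodesic representatives of $w$, that each pairwise diagram embeds compatibly in one common tiling, and hence that the completion is well defined and unique --- is the actual content of the theorem and is asserted rather than proved in your write-up; this is exactly the part for which the paper defers to \cite{Jg2} and \cite{Kr}, and your proposal as it stands has a genuine gap there.
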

The tiling produced by the theorem for a geodesic word $w$ is called 
the {\em geodesic completion} of $w$.

\begin{proof}[Proof of Theorem~\ref{thm:lg2}.]
Let $R_{ab}$ be the rectangle in the \pcomplex\ determined by the 
points $(0,0)$ and $(an-b , an+b  )$ for some integers $a$ and $b$, and 
let $w_{ab}$ be the label of any geodesic edge-path between these two 
points (for example the edge-path from $(0,0)$ to  $(an-b , 0  )$ to 
$(an-b , an+b  )$ will do). 
Then since the words labelling  geodesic edge-paths in the \pcomplex\ 
 are geodesic words in the augmented Dehn presentation 
(by Lemma~\ref{lem.geodesic}), $w_{ab}$ is a geodesic word.

Therefore, $w_{ab}$ is a geodesic word in a \gridp\
which labels  a geodesic edge-path between two opposite 
corners of  the rectangle $R_{ab}$. By Theorem~\ref{thm:gcomp},  a 
geodesic word in the augmented Dehn presentation represents the word 
$w_{ab}$ if and only if it is the label of a geodesic edge-path between   
$(0,0)$ and $(an-b , an+b  )$. 
 So all geodesic representatives of $w_{ab}$ are words labelling geodesic 
edge-paths from   $(0,0)$ and $(an-b , an+b  )$ in the \pcomplex.
 Finally,  by Lemma~\ref{lem:lg1}, every peripheral element is presented 
by a word   $w_{ab}$ for some $a, b \in \mathbb{Z}$ and the result follows.
\end{proof}

\subsection{Proof of Theorem \ref{thm.arcs}}
\label{sub.thm2}

Let $\PD$ be a prime, reduced alternating projection of a knot $K$ in
$S^3$. Theorem \ref{thm.arcs} follows from the following lemma.

\begin{lemma}
\label{lem:perarcs}
\rm{(a)}
The Wirtinger arcs, Wirtinger loops, and Dehn arcs and conjugates of
the short arcs of $\PD$ have explicit 
geodesic representatives in the \pcomplex.
\newline
\rm{(b)} The above geodesic representatives of Wirtinger arcs, Wirtinger 
loops and Dehn arcs are non-peripheral, and the above  geodesic 
representatives of the short arcs are not conjugate to a peripheral element.
\end{lemma}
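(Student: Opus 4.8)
The plan is to prove part (a) by writing down, for each of the four kinds of arc, a word representing it via Remark~\ref{rem.phi}, and then exhibiting that word as a short or monotone (``staircase'') edge-path in the \pcomplex; by Lemma~\ref{lem.geodesic} every monotone edge-path of the \pcomplex\ labels a geodesic word, so these are the required explicit geodesic representatives. The dictionary between the geometry and the complex is supplied by Lemma~\ref{lem:lc2}: following the double of $\PD$ through an undercrossing $c_{2i-1}$ is exactly the diagonal of the square $C_{2i-1}$ from bottom-left to top-right corner (the $(1,1)$-direction of the longitude), while a meridional return through the half-space is the opposite diagonal of $C_{2n}$, from bottom-right to top-left (the $(-1,1)$-direction of $\mu$). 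Thus a Wirtinger arc, which follows the double through $k$ crossings and then returns, becomes a path running along the longitudinal diagonals of $k$ consecutive squares followed by one meridional step; a Wirtinger loop and a Dehn arc become paths of length at most two inside a single relator square; and a short arc, after conjugating so that its jump sits at the beginning of the word, becomes a path of the same monotone type. In each case the path is monotone and hence geodesic.

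For part (b) I would combine Theorem~\ref{thm:lg2} with the explicit peripheral lattice coming from Lemma~\ref{lem:lg1}. Writing $P_{a,b}=(an-b,an+b)$ for the endpoint associated to $\lambda^a\mu^b$, the set of peripheral endpoints is exactly the sublattice $\{(x,y): x+y\equiv 0 \pmod{2n}\}$, since $x+y=2an$ and $y-x=2b$ recover $a$ and $b$ uniquely. By Theorem~\ref{thm:lg2} a geodesic word is peripheral precisely when it labels a geodesic edge-path from $(0,0)$ to some $P_{a,b}$; and if a geodesic representative built in part (a) labels a path to $(x,y)$ while the element were also peripheral, then by the uniqueness of the geodesic completion (Theorem~\ref{thm:gcomp}) the two geodesic edge-paths would share endpoints, forcing $(x,y)=P_{a,b}$. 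Hence it suffices to locate the endpoint $(x,y)$ of each representative and check $x+y\not\equiv 0\pmod{2n}$. For a Wirtinger loop the displacement is a single longitudinal pair, giving $x+y=2$; for a Dehn arc it is a single generator, giving $x+y=\pm1$; in both cases $x+y\not\equiv 0\pmod{2n}$ for $n\ge 2$ (the case $n=1$ being a torus knot). The only short displacement that could be peripheral is the meridional pair with $x+y=0$, which is precisely the short meridian loop explicitly excluded from the definition of a Wirtinger loop. For a Wirtinger arc, the longitudinal part lands on a diagonal point $(j,j)$, with $j$ the number of undercrossings traversed, and the final meridional return shifts the endpoint off that point; the constraint $1<k<2n$ is exactly what prevents $j$ from being $0$ or from completing a full period of the longitude, so the resulting endpoint has $x+y\not\equiv 0\pmod{2n}$.

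The short arcs require the conjugacy refinement rather than the word problem, and this is the step I expect to be the main obstacle. A short arc reads as $P\cdot s\cdot Q^{-1}$, where $P$ and $Q$ follow the double to the chosen crossing along the two strands and $s$ is the two-letter jump across it, and one must show that no conjugate of this word is peripheral. Here I would pass to a cyclically reduced form and invoke the solution to the conjugacy problem for grid presentations (Remark~\ref{rem.conjugacy}): a cyclically reduced geodesic word is conjugate to a peripheral element if and only if some cyclic permutation of it labels a geodesic edge-path of the \pcomplex\ from $(0,0)$ to a lattice point $P_{a,b}$. I would then run through the cyclic permutations of the short-arc word, compute the endpoint of each in the \pcomplex, and verify that none lands on the sublattice $\{x+y\equiv 0\pmod{2n}\}$.

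The genuinely delicate points are establishing that conjugacy into the peripheral subgroup is governed by \emph{cyclic} edge-paths in the \pcomplex, and controlling all cyclic permutations of the short-arc word uniformly; by contrast, once the monotone representatives of part (a) are in hand, the non-peripherality of the Wirtinger arcs, Wirtinger loops, and Dehn arcs reduces to the short endpoint computations above.
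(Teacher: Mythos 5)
Your part (a) is essentially the paper's: the dictionary of Lemma~\ref{lem:lc2} plus Lemma~\ref{lem.geodesic} is exactly how the paper produces geodesic representatives, and your conjugation trick for short arcs is the paper's as well. The genuine gap is in part (b), at the Wirtinger loops. Your argument rests on computing a displacement $(x,y)$ and testing $x+y\equiv 0\pmod{2n}$, and you assert that a Wirtinger loop has longitudinal displacement $(1,1)$, the meridional displacement occurring only for the excluded short loop at the basepoint. Both assertions fail. A Wirtinger loop passes down through a region $R$ and up through an adjacent region $R'$; the four adjacent-region pairs at a crossing correspond to the four two-edge corner-to-corner paths of its relator square, and two of those four run along the \emph{meridional} diagonal. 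So at every crossing of $\PD$ there are legitimate, non-excluded Wirtinger loops whose edge-path has displacement $(\mp1,\pm1)$: geometrically they are meridians of arcs of $K$ other than the basepoint arc, they are conjugate to $\mu$, and the entire content of the lemma for them is that they are nevertheless not \emph{equal} to any $\lambda^a\mu^b$. Your lattice test is silent about exactly these loops, since their displacement lies on the peripheral sublattice; no displacement or rectangle-shape argument can separate them from $\mu^{\pm1}$. The paper closes this case by a word-level argument: by Theorem~\ref{thm:gcomp} together with Lemma~\ref{lem:lc2}, the geodesic representatives of $\mu^{\pm1}$ are \emph{exactly} the labels of the two edge-paths across $C_{2n}$ between $(0,0)$ and $(-1,1)$, traversed in either direction, and a non-excluded Wirtinger loop's two-letter word is literally not among them (here the $C^{\prime\prime}(4)$ property that a pair determines its relator is what is really used). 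The same issue invalidates your transfer step: uniqueness of the geodesic completion fixes the abstract labelled rectangle of a geodesic word, hence its dimensions, but not its position in the periodic \pcomplex, so ``the two geodesic edge-paths would share endpoints'' does not follow. What does follow, and is what the paper implicitly uses for Wirtinger arcs, is that a word labels at most one edge-path from $(0,0)$ (the four edges at each vertex carry the four distinct labels of a single relator square), so a geodesic word embedded from $(0,0)$ is peripheral only if that unique path ends on the peripheral lattice.

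For the short arcs you defer to an unproved conjugacy criterion, and it does not match the one in Remark~\ref{rem.conjugacy}: there the conjugating freedom appears as a vertical offset $k$ of both endpoints of a geodesic path, not as cyclic permutation of a word based at the origin. The paper in fact avoids the conjugacy problem at this step entirely. It conjugates the short-arc word so that it embeds in the \pcomplex\ along the South-West to North-East diagonal, notes that a peripheral element of geodesic length less than $2n$ would have to be a pure meridian power $\mu^b$, and then observes that the corresponding conjugate of $\mu^b$ has its geodesic representative running along the South-East to North-West axis; since by Theorem~\ref{thm:gcomp} one word cannot have geodesic completions of both shapes, the two elements are distinct. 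To repair your write-up, replace the displacement test for Wirtinger loops by the explicit enumeration of the meridian's geodesic representatives, and replace the unproved conjugacy criterion for short arcs either by a proof of it or by this length-plus-direction argument.
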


\begin{proof}
We apply the notation and discussion from the last three paragraphs of 
Subsection~\ref{sub.Dehnp}.

Let $w$ be a word representing a Wirtinger arc, Wirtinger loop, short 
arc or Dehn arc of $\PD$. Recall the map $\phi$ from Equation \eqref{eq.phi}
and the method for reading the a representative  word 
$\phi^{-1}(\iota (w))$ in the augmented Dehn presentation described 
in Remark \ref{rem.phi}. We  use the \pcomplex\ to show that its 
image $\phi^{-1}(\iota (w))$ is non-peripheral. 

We deal with each type of loop separately. Throughout we let 
$l=l_1 l_2 \cdots l_{2n}$ be a geodesic representative of $\lambda$ which 
was constructed in the proof of Lemma~\ref{lem:lg1}. It is given by an 
edge-path following the sequence of relator squares $C_1,C_2, \ldots , 
C_{2n-1}$. Each two letter subword $l_{2i-1}l_{2i}$ of $l$ labels an edge 
path on $C_{2i-1}$. Also let
$m=m_1m_2$ be a geodesic representative of $\mu$ in the augmented Dehn 
presentation. By Theorem~\ref{thm:lg2}, $l$ and $m$ are labels of edge 
paths from $(0,0)$ to $(n,n)$ and $(0,0)$ to $(-1,1)$ respectively, 
in the \pcomplex.

\noindent{\bf Wirtinger arcs:}  
Wirtinger arcs are loops which follow the double $\lambda$ of $\PD$ 
returning to the base-point after  passing through  fewer than $2n$ 
crossings. Therefore a Wirtinger loop is represented by a subword 
$l_1 l_2 \cdots l_{2p}$, wher $2p<2n$, of $l$. Moreover,  $l_1 l_2 \cdots l_{2p}$ 
is represented by the label of any   edge-path from   $(0,0)$ to 
$(p,p)$ for $p<n$. By Theorem~\ref{thm:lg2}, it follows that 
$l_1 l_2 \cdots l_{2p}$ is non-peripheral as it does not label  a path 
between $(0,0)$ and $(an-b , an+b  )$.

\noindent{\bf Wirtinger loops:}  
We may move a Wirtinger loop close to some crossing $c_i$. By the way 
that the relators of the augmented Dehn presentation are read from $\PD$ 
(see Subsection~\ref{sub.Dehnp}), we see that the Wirtinger loop can be 
described by a geodesic edge-path between two opposite corners of $D_i$ 
(which two opposite corners depends on the given Wirtinger loop). 
Let the label of this edge-path be $w$. 

The word $w$ is geodesic of length two. Therefore, if $w$ is peripheral 
it must represent $\mu^{\pm1}$. However, by Lemma~\ref{lem:lc2}, the only 
geodesic words which represent $\mu^{\pm1}$ arise as a path from $(0,0)$ 
to $(\mp1,\pm1)$ in the peripheral complex,  so $w$ cannot be the label  
of such an edge-path since  by the definition of Wirtinger loops, $w$ is 
not a representative of the meridian, which is described by a path from 
$(0,0)$ and $(-1,1)$.

\noindent{\bf Dehn arcs:}  
Suppose that  a given Dehn arc intersects the bounded region $a$ of $\PD$. 
Then it is represented by $X_a X_0^{-1} $ in the augmented Dehn presentation.  
The word $X_a X_0^{-1}$  is geodesic since it is freely reduced ($a\neq 0$) 
and clearly does not contain a chain subword (see Theorem~\ref{thm:GCT}).

The meridian $\mu$ has exactly two geodesic representatives which label 
the edge-path $(0,0)$ to $(-1,1)$ of $C_{2n}$. It is easily seen from the 
definition of Dehn arcs that neither of these words can be  $X_a X_0^{-1}$. 

\noindent{\bf Short arcs:}  
Short arcs are found by walking around  the double $\lambda$ of $\PD$, and 
at some point, jumping to an adjacent arc of $\lambda$ and walking back 
to the base point in one of two ways. Short arcs are then represented 
by words of the form
\begin{equation}
\label{eq:3} l_1 l_2 \cdots l_{k}  l_p \cdots l_{2n}  
\end{equation}
and 
\begin{equation}
\label{eq:4} 
l_1 l_2 \cdots l_{k}  l_p \cdots  l_{1} . 
\end{equation}

These representatives of short arcs  do not necessarily embed as 
edge-paths in the \pcomplex. However, since $l^2$ embeds in 
the complex as a path from $(-n,-n)$ to $(n,n)$, the conjugates 

\begin{equation}
\label{eq:1}  
l_p \cdots l_{2n}   l_1 l_2 \cdots l_{k}   
\end{equation}
and 

\begin{equation}
\label{eq:2}  
l_p \cdots l_{1}   l_1 l_2 \cdots l_{k}   
\end{equation}
both embed in the \pcomplex\ as paths from 
$(-p,-p)$ to $(k,k)$, for (\ref{eq:1}), and $(q,q)$ to $(r,r)$ or  
$(r,r)$ to $(q,q)$, for (\ref{eq:2}), where $q=\min\{k,p\}$ and 
$r=max\{k,p\}$. These  paths travel along the  South-West to North-East axis.
 
Since these words in (\ref{eq:3}) and (\ref{eq:4}) are of length 
less that $2n$, if they are peripheral, then they must be equal to 
a power of the meridian $m^b$.  This will happen if and only if 
(\ref{eq:3}) and (\ref{eq:4})  represent 
\begin{equation}\label{eq:5}  (l_1 \cdots l_{k})^{-1}  m^b  
(l_1 \cdots l_{k})  , \end{equation}
for some integer $b$.
This  conjugate of $m^b$ embeds into the \pcomplex\ as a path 
from $( k,k )$ to $(0,0)$ to $(-b,b)$ to $(k-b,k+b)$
 which has a geodesic representative  as a path from $( k,k )$ to 
$(k-b, k+b)$.
But this path travels along the South-East to North-West axis. 
Therefore, by Theorem~\ref{thm:gcomp}, the geodesic words in   
(\ref{eq:3}) and (\ref{eq:4}) cannot be equal to the  words of the 
form in (\ref{eq:5}) and the short arcs are non-peripheral. 
\end{proof}

\begin{remark}
The argument above showing that short arcs are non-peripheral in fact proves a stronger result. It shows that any loop in the knot complement that follows the double of $\Delta$ from the basepoint, then at some point jumps (above the projection plane) to any other point on the double, then follows it back to the basepoint in either direction is non-peripheral.  
\end{remark}

\begin{remark}
\label{rem.conjugacy}
Notice that in the proof for the non-peripherality of short arcs we 
actually solved the conjugacy problem.  We could also have shown that 
these elements were non-peripheral by using Johnsgard's solution to 
the conjugacy problem  \cite{Jg1}: using Johnsgard's algorithm, the 
fact that the \pcomplex\ contains the geodesic completion of 
$l^m$, and the periodicity of the \pcomplex, it is straight-forward to show that a geodesic word in the augmented Dehn 
presentation is conjugate to a peripheral element $l^am^b$ if and only 
if it embeds as a geodesic path from $(0,k)$ to $(an+k-b , an+b)$, 
for some integer $k$. It is easy to see that two words in    
(\ref{eq:1}) and (\ref{eq:2})  are not of this form. We can use a 
similar argument for Wirtinger loops. 

Also note that this characterisation of conjugates of peripheral 
elements as paths in the \pcomplex\ provides a method for solving 
the peripheral conjugacy problem.
\end{remark}

\subsection{The \pcomplex \ and the Gauss code of an alternating knot}
\label{sub.logcabin}

In our proof of Theorem \ref{thm.arcs}, the peripheral complex
plays a key role, and encodes the peripheral structure of a prime, reduced, 
alternating projection of a knot. In this section we discuss additional 
properties of the peripheral complex and its relation with the Gauss code.

In Subsection~\ref{sub.log-cabin}, we constructed the peripheral complex by 
placing relator squares of the augmented Dehn presentation on the plane in a 
way determined by the oriented knot diagram. As previously noted,  some  
conventions were used in this construction. There is a way 
 to construct the peripheral complex 
directly  from the relators of the augmented Dehn presentation without any reference to the knot diagram:
\begin{enumerate}
\item Choose any relator square from the augmented Dehn presentation of a 
prime, reduced, alternating knot diagram, and place it in the Euclidean plane.

\item Choose two diagonally opposite vertices of this relator square, call 
them $a$ and $b$. Form a ``diagonal line'' of relator squares  by placing 
copies of the relator square in such a way that each vertex $a$ is identified 
with a vertex $b$ and all of the relator squares are translations of the first.

\item Complete the tiling by adding relator squares from the augmented Dehn 
presentation in a way consistent with the words labelling edge-paths. (Proposition~\ref{p:unorcom} tells us that this can be done in a unique way.)
\end{enumerate}
The construction is indicated in Figure~\ref{fig:unor}.
Throughout this section we  call this the {\em unoriented construction} 
of the peripheral complex, and we  refer to the complex constructed in 
Subsection~\ref{sub.log-cabin} as the {\em oriented construction}. We will 
also refer to the resulting  complexes as the unoriented and oriented 
peripheral complexes respectively.

%
\begin{figure}[!htpb]
\begin{center}
\labellist
\small\hair 2pt
\pinlabel $1$ at  44 170
\pinlabel $1$  at 190 97
\pinlabel $1$ at  260 170
\pinlabel $1$ at  333 243
\pinlabel $1$  at 477 97
\pinlabel $1$ at  549 170
\pinlabel $1$ at  622 243
\pinlabel $2$ at  549 97
\pinlabel $2$ at  622 170
\pinlabel $2$ at  694 243
\pinlabel $1$  at 837 97
\pinlabel $1$ at  910 170
\pinlabel $1$ at  980 243
\pinlabel $3$ at  980 97
\pinlabel $3$ at  1052 170
\pinlabel $3$ at  1125 243
\pinlabel $2$ at  910 97
\pinlabel $2$ at  980 170
\pinlabel $2$ at  1052 243
\endlabellist
\includegraphics[height=0.18\textheight]{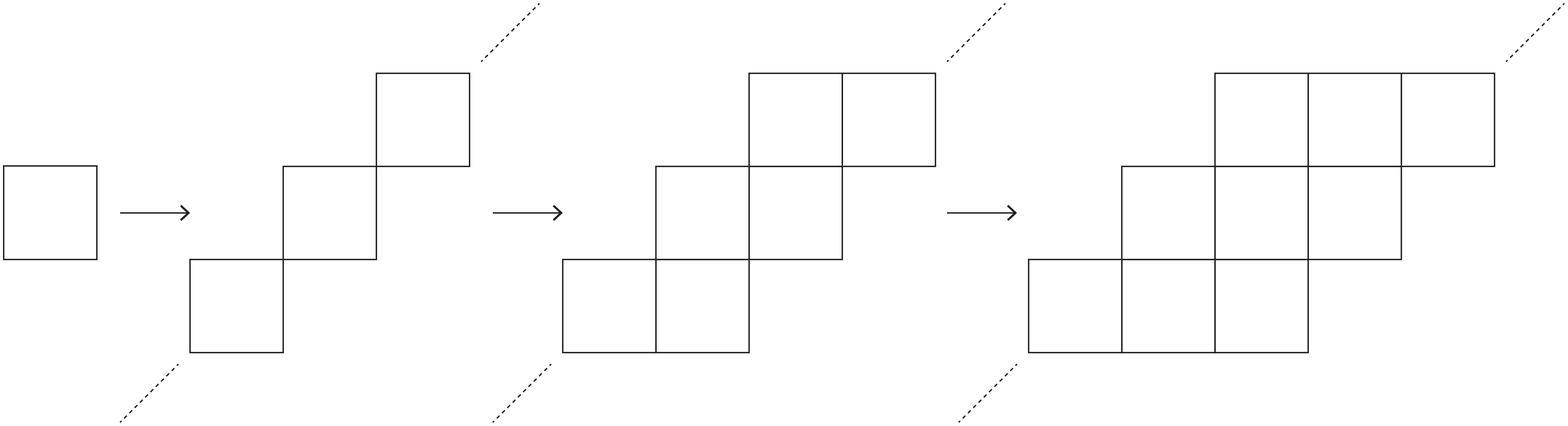}
\label{fig:unor}
\end{center}
\end{figure}



The following proposition tells us that the complex just described exists 
and is the peripheral complex.
\begin{proposition}
\label{p:unorcom}
The unoriented construction of the peripheral complex described above,  
produces  a unique plane tiling of relator squares. Moreover, the resulting 
complex is isometric to the oriented peripheral complex constructed in 
Subsection~\ref{sub.log-cabin}.  
\end{proposition}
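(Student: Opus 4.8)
The plan is to reduce everything to the already-constructed oriented \pcomplex, which I will denote $\Pi$, using the translational symmetry of $\Pi$ together with the forcing power of the \canc4 condition. The first step is to record that symmetry. The matching conditions of Lemma~\ref{lem:lc1} (the top of $C_i$ agrees with the bottom of $C_{i+1}$, and the extreme vertical edges of a fundamental block coincide), together with the horizontal $2n$-periodicity of the fundamental block, show that the square of $\Pi$ with lower-left corner $(p,j)$ is the relator square $C_{p+j+1}$ (indices mod $2n$). Hence $\Pi$ is invariant under the meridian translation $(1,-1)$ and under the longitude translation $(n,n)$, but \emph{not} under $(1,1)$ in general. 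Consequently, in the anti-diagonal (meridian) direction the squares of $\Pi$ are genuine translates of one another, meeting corner-to-corner exactly as a diagonal line produced in the second step of the unoriented construction. Thus a single meridian strip of $\Pi$ is precisely a diagonal line of translates of one of its relator squares.

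Next I would prove that the completion is forced, which gives uniqueness. Once the diagonal line of translates is placed, consider any empty unit cell sharing a corner $v$ with two consecutive strip squares: it shares one full edge with each, and these two edges are adjacent, both issuing from $v$. Reading the boundary of the cell, these two oriented labelled edges form a pair, and by the \canc4 condition a pair determines a relator up to cyclic permutation and inversion; since the geometric placement of the two edges fixes both the cyclic position and the orientation, the relator square occupying the cell is uniquely determined. Propagating outward band by band, every new cell acquires two adjacent, already-filled edges before it is placed, so the entire tiling is forced. This shows that the completion is unique \emph{if it exists}.

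Existence, and simultaneously the identification with $\Pi$, then come for free. Since $\Pi$ is itself a consistent plane tiling of relator squares containing the meridian strip, the uniqueness just proved forces the unoriented completion of that strip to \emph{be} $\Pi$; no separate consistency check for the fill is needed, because $\Pi$ supplies an explicit consistent completion. This settles the anti-diagonal choice. For the other diagonal of a seed square I would apply the reflection $\sigma$ in a horizontal line: as the symmetrized relator set is closed under cyclic permutation and inversion, $\sigma$ carries relator squares to relator squares, so $\sigma(\Pi)$ is again a valid tiling; moreover $\sigma$ interchanges the two diagonal directions, so the longitude strip is a translate-line of $\sigma(\Pi)$, and the same uniqueness argument forces the completion to equal $\sigma(\Pi)$, which is isometric to $\Pi$ via $\sigma$. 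Finally, for an arbitrary seed square $S$ and arbitrary diagonal, I would observe that $S$ is a placement of some relator $R_i$, that $R_i$ occurs in $\Pi$ as some $C_k$ (by Theorem~\ref{thm.Wn} each crossing contributes a relator that appears in the fundamental block), and that the two placements of $R_i$ differ by a unique isometry $g$ in the dihedral group $D_4$ of the square. Since the construction is equivariant under such isometries, the completion from $S$ is $g^{-1}$ applied to the completion from $C_k$ with the transported diagonal, which by the previous cases is $\Pi$ or $\sigma(\Pi)$; in every case it is isometric to $\Pi$.

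The main obstacle is the geometric bookkeeping in the forcing and symmetry steps: one must verify that fixing two adjacent oriented, labelled edges pins down not merely the relator but its exact placement in the plane (so that \canc4 genuinely removes all ambiguity as one propagates outward), and that equivariance under $D_4$ together with the single reflection $\sigma$ covers all choices of seed square and of diagonal uniformly. Once the symmetry is organised in this way, the only essential inputs are Lemma~\ref{lem:lc1}, which yields the meridian symmetry of $\Pi$, and the \canc4 condition, which does the forcing.
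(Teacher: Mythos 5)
Your proof has the same skeleton as the paper's: uniqueness of the filling is forced by the \canc4 condition (two adjacent labelled, oriented edges determine the relator square occupying a cell), and existence comes from matching the seed square with its chosen diagonal into the oriented \pcomplex\ $\Pi$ and then invoking uniqueness. Your computation that the tile of $\Pi$ with lower-left corner $(p,j)$ is $C_{p+j+1}$ (hence that $\Pi$ is invariant under $(1,-1)$ but not under $(1,1)$), and your band-by-band forcing argument for uniqueness, are correct; the paper compresses these into two sentences.

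The gap is in the existence step for the second diagonal, and it infects your $D_4$ reduction. Uniqueness identifies the completion of a main-diagonal line with $\sigma(\Pi)$ only if that line actually lies inside $\sigma(\Pi)$, i.e.\ only if the seed is one of the tiles $\sigma(C_j)$ of $\sigma(\Pi)$; and the tile set of $\Pi$ is \emph{not} closed under your $\sigma$ (reflection in a horizontal line). Concretely, in the paper's fundamental block for $5_2$ the (bottom, top) label pairs that occur are $(1,6)$, $(6,0)$, $(0,6)$, $(0,1)$; since $\sigma$ swaps bottom and top, $C_1$ could be a tile of $\sigma(\Pi)$ only if some tile of $\Pi$ had bottom $6$ and top $1$, which never happens. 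So in the case your reduction actually produces --- a tile $C_k$ of $\Pi$ with the transported diagonal being the main one --- neither of your previous cases applies: $\Pi$ has no translate-line through $C_k$ in that direction (your own observation), and $\sigma(\Pi)$ does not contain $C_k$ at all, so no tiling containing that diagonal line has been exhibited. Indeed, by the very equivariance you use, your case (ii) covers exactly the same (placement, diagonal) pairs as case (i): the pair consisting of the placement $g\sigma(C_k)$ with diagonal $g(\text{NE--SW})$ equals the pair consisting of $h(C_k)$ with diagonal $h(\text{NW--SE})$, where $h=g\sigma$; so only one of the two diagonals of each placement of $R_i$ is ever reached. What is missing is precisely the fact the paper invokes at this point: every relator square appears in the fundamental block \emph{together with its mirror image}, because each crossing of $\PD$ is traversed twice, once under and once over, and the two visits yield placements differing by a left--right flip (for $5_2$: $C_1=(1,2,6,5)$ and $C_6=(1,5,6,2)$ in (bottom, right, top, left) notation). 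Granting that, the second appearance $C_{k'}=\sigma_v(C_k)$ supplies, via your own equivariance, the missing diagonal of every placement and your argument closes; without it, existence is unproven for half of the seed data.
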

\begin{proof}
First of all, we note that if the complex exists, then it must be unique, 
since the corners between pairs of relator squares in the ``diagonals'' 
used in the construction are labelled by  pairs and, in a grid presentation, 
a pair uniquely determines a relator.    

To show existence, let $T$ be the relator square in the plane from the first 
step of the  unoriented construction above. Suppose also that $T$ has  the 
vertices $a$ and $b$ specified. 
Since every relator square and the reflection of every relator square of the 
augmented Dehn presentation appears in the fundamental block of the oriented 
peripheral complex, there is an isometry taking $T$ to a relator square of 
the peripheral complex which sends the  vertices $a$ and $b$ to the top-left 
and bottom-right vertices of that relator square. By uniqueness, this extends 
to  an isometry of the complexes. 
\end{proof}

The following proposition tells us that an unoriented knot  can be 
recovered from its unoriented peripheral complex, and an oriented knot from 
its oriented peripheral complex.
\begin{proposition}
\label{prop.gauss}
Let $\PD$ be a prime, reduced, alternating, oriented  knot diagram. 
The Gauss code of $D$ can be recovered from the oriented peripheral complex; 
and the Gauss code of $\PD$ or its inverse $-\PD$ can be recovered from the  
unoriented peripheral complex. 
\end{proposition}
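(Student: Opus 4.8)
The plan is to read the Gauss code directly off the fundamental block $C_1, C_2, \ldots, C_{2n}$, which is a fundamental domain for the translation symmetries of the \pcomplex\ and which, by its very construction in Subsection~\ref{sub.log-cabin}, records a full traversal of the knot. Recall that the Gauss code of an oriented diagram is the cyclic sequence recording, as one walks once around $\PD$, which physical crossing is met at each of the $2n$ crossing-visits, together with the over/under marking and the sign at each visit. By the labelling convention of Subsection~\ref{sub.log-cabin}, the squares $C_1, \ldots, C_{2n}$ are exactly the crossing-visits $c_1, \ldots, c_{2n}$ listed in the order in which they are met while walking around $\PD$, so the ordered list of squares is already the sequence of crossing-visits; the task is to show that this ordered list, together with the over/under and sign data, can be reconstructed from the complex.

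First I would extract the ordered fundamental block from the bare complex. The \gridp\ structure of the augmented Dehn presentation (Theorem~\ref{thm.Wn}) colours every edge black or white and hence equips the \pcomplex\ with a well-defined horizontal and a well-defined vertical direction. By Lemma~\ref{lem:lg1} the translation lattice of the complex is generated by the meridian $(-1,1)$ and the longitude $(n,n)$, so it has covolume $2n$ and its shortest purely horizontal period is $(2n,0)$. Consequently a strip of relator squares one square tall and spanning one horizontal period is exactly $2n$ squares wide and is a copy of the fundamental block; reading its squares in the forward horizontal direction, starting at the image of the origin, recovers the ordered sequence $C_1, \ldots, C_{2n}$.

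Next I would recover the crossing identifications, the over/under marking, and the signs. Each relator square in the strip carries its four edge-labels, which determine its relator; since by the \canc4 condition (Theorem~\ref{thm.Wn}) a single pair already determines a relator, distinct crossings give distinct relators and no two distinct relators share a pair. Hence the relator attached to $C_k$ pins down the physical crossing uniquely, and the two visits to one crossing are precisely the pair of indices $j,k$ for which $C_j$ and $C_k$ are copies of the same relator square; this matching is the chord pairing of the Gauss code. Because the convention assigns $c_1$ to the first under-crossing and $\PD$ alternates, Lemma~\ref{lem:lc2}(2) shows that the odd-indexed squares follow under-crossings and the even-indexed squares follow over-crossings, so every crossing is paired between one odd and one even index and the over/under marking is immediate. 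Finally the sign at each crossing is read from the sink/source pattern of the four oriented edges of the corresponding relator square, together with the forward direction of travel. This yields the full oriented Gauss code of $\PD$.

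For the unoriented statement I would invoke Proposition~\ref{p:unorcom}, which identifies the unoriented and oriented complexes up to isometry, and observe that the only datum not determined by the unoriented construction is the sense in which the knot is traversed: reversing this sense replaces $c_1, \ldots, c_{2n}$ by $c_{2n}, \ldots, c_1$ and hence the Gauss code of $\PD$ by that of $-\PD$. Since there is a symmetry of the complex interchanging the two senses of the diagonal, the unoriented complex determines the code only up to this reversal, giving the code of $\PD$ or of $-\PD$. I expect the main obstacle to be the first step: canonically locating the ordered fundamental block in the otherwise homogeneous tiling and verifying that traversing it reproduces the walk around $\PD$, which rests on the periodicity recorded in Lemma~\ref{lem:lc1}. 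A secondary technical point is extracting the crossing signs cleanly from the edge-orientations and confirming that the orientation-reversing symmetry is exactly the ambiguity between $\PD$ and $-\PD$ in the unoriented case.
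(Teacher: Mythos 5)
Your overall strategy is the same as the paper's: read a horizontal $2n\times 1$ strip of the \pcomplex, use the fact that its squares occur in the order in which the crossings are visited along the knot, recover the chord pairing from the two occurrences of each relator square, and attribute the ambiguity in the unoriented case to a reflection, which corresponds to replacing $\PD$ by $-\PD$. However, two of your supporting steps are not sound as written. First, you locate the fundamental block by asserting that ``by Lemma~\ref{lem:lg1} the translation lattice of the complex is generated by $(-1,1)$ and $(n,n)$.'' Lemma~\ref{lem:lg1} says nothing about the translation symmetries of the labelled tiling; it only identifies which group elements are represented by edge-paths between certain lattice points. That the translation group is no finer than the lattice you name (equivalently, that the shortest horizontal period is $(2n,0)$ rather than, say, $(n,0)$) requires a separate argument --- for instance, that each relator square occurs exactly twice per period of a row and that the checkerboard sink/source pattern forbids odd horizontal translations. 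The paper avoids the issue entirely: it takes \emph{any} $2n\times 1$ horizontal block (the width $2n$ is known a priori, not deduced from symmetries), observes that by construction it is a cyclic permutation of a fundamental block, and notes that a cyclic permutation is harmless because the Gauss code is a cyclic word; anchoring the strip at ``the image of the origin'' is unnecessary.

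Second, your claim that ``the sign at each crossing is read from the sink/source pattern of the four oriented edges'' cannot work: in a \gridp\ every relator square has the identical pattern of two sinks and two sources at opposite corners, so this pattern carries no crossing-sign information. Fortunately the step is also unnecessary for the proposition as the paper interprets it: the Gauss code being recovered is the cyclic visit sequence with over/under markings, which the paper reads off from the placement (orientation) of the first square together with the alternation of the diagram --- essentially your odd/even observation, but valid for any starting square. Finally, a point of phrasing with mathematical content: in the unoriented case the ambiguity is not ``a symmetry of the complex'' (such an orientation-reversing self-isometry need not exist, and if it did the codes of $\PD$ and $-\PD$ would coincide); rather, Proposition~\ref{p:unorcom} determines the unoriented complex only up to a possibly orientation-reversing isometry, and composing the identification with a reflection reverses the reading direction of the strip, turning the code of $\PD$ into that of $-\PD$. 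That is exactly the paper's reason, and you should state it that way.
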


\begin{proof}
We first prove the result for the oriented complex. Choose any $2n\times 1$ 
horizontal block of the complex.  Every relator square appears exactly twice 
in this block. By the construction of the complex, this block is a cyclic 
permutation of a fundamental block, and therefore the order of the relator 
squares in the block is precisely the order we meet the crossings as we travel 
around the knot in the direction of the orientation from some base point. 
With this observation, it is straight-forward to recover the Gauss code:
label the relator squares $S_1, S_2, \ldots , S_{2n}$ by reading along the 
strip from left to right. 
Assign the number $-1$ to $S_1$ if it has orientation 
%
\begin{center}
\includegraphics[height=20mm]{c}
\end{center}
otherwise assign 
the number $+1$ to $S_1$. Suppose you have 
assigned the number $\pm j$ to the relator square $S_i$. If the relator 
square $S_{i+1}$ has not been encountered previously assign the number  
$\mp (j+1)$ to it, if the relator square has been encountered previously 
and has been assigned the number $\pm p$, then assign the number $\mp p$ 
to this square.  The resulting sequence is the Gauss code.

To recover a Gauss code from an unoriented peripheral complex, we can use the 
same method. However, since the $2n\times 1$ horizontal strip of the 
unoriented complex can be a reflection of a  $2n\times 1$ horizontal strip 
of the oriented complex, we are unable to determine if the Gauss code 
obtained is that of the knot diagram or its inverse.
\end{proof}


%


\bibliographystyle{hamsalpha}
\bibliography{biblio}
\end{document}